\newcommand\eqd{\stackrel{\mathclap{\mbox{d}}}{=}}
\renewcommand*{\thanks}[1]{%
  \footnotemark
  \protected@xdef\@thanks{\@thanks
    \protect\footnotetext[\arabic{footnote}]{#1}}%
}
\renewcommand*{\thanks}[1]{%
  \footnotemark
  \protected@xdef\@thanks{\@thanks
    \protect\footnotetext[\arabic{footnote}]{#1}}%
}
\newtheorem{lemma}{Lemma}[section]
\newtheorem{corollary}{Corollary}[section]
\newtheorem{remark}{Remark}[section]
\newtheorem{theorem}{Theorem}[section]
\newtheorem{condition}{Condition}[section]
\newcommand{\AlgUpper}{{\textsc{UCL}}}
\title{Uncertainty Quantification for Bayesian Optimization}
\author{ Rui Tuo\thanks{Two authors contributed equally to this work.} \\ Wm Michael Barnes '64\\ Department of Industrial and Systems Engineering\\ Texas A\&M University \and 
Wenjia Wang \\
Data Science and Analytics Thrust\\
The Hong Kong University of Science\\ and Technology (Guangzhou)\\
and Department of Mathematics\\ 
The Hong Kong University of Science and Technology} 
\begin{document}

\maketitle

\begin{abstract}
Bayesian optimization is a class of global optimization techniques. In Bayesian optimization, the underlying objective function is modeled as a realization of a Gaussian process. Although the Gaussian process assumption implies a random distribution of the Bayesian optimization outputs, quantification of this uncertainty is rarely studied in the literature. In this work, we propose a novel approach to assess the output uncertainty of Bayesian optimization algorithms, which proceeds by constructing confidence regions of the maximum point (or value) of the objective function. These regions can be computed efficiently, and their confidence levels are guaranteed by the uniform error bounds for sequential Gaussian process regression newly developed in the present work. Our theory provides a unified uncertainty quantification framework for all existing sequential sampling policies and stopping criteria.
\end{abstract}

\section{INTRODUCTION}

The empirical and data-driven nature of data science field makes uncertainty quantification one of the central questions that need to be addressed in order to guide and safeguard decision makings. In this work, we focus on Bayesian optimization, which is effective in solving global optimization problems for complex blackbox functions. Our objective is to quantify the uncertainty of Bayesian optimization outputs. Such uncertainty comes from the Gaussian process prior, random input and stopping time. Closed-form solution of the output uncertainty is usually intractable because of the complicated sampling scheme and stopping criterion.

Let $f$ be an underlying 
continuous function over $\Omega$, a compact subset of $\RR^p$. The goal of global optimization is to find the maximum of $f$, denoted by $\max_{x\in \Omega} f(x)$, or the point $x_{max}$ which satisfies $f(x_{max})=\max_{x\in \Omega} f(x)$. In many scenarios, objective functions can be expensive to evaluate. For example, $f$ defined by a complex computer model may take a long time to run. Bayesian optimization is a powerful technique to deal with this type of problems, and has been widely used in areas including designing engineering systems \citep{forrester2008engineering,jones1998efficient}, materials and drug design \citep{frazier2016bayesian,negoescu2011knowledge,solomou2018multi}, chemistry \citep{hase2018phoenics}, deep neural networks \citep{diaz2017effective,klein2016fast}, and reinforcement learning \citep{marco2017virtual,wilson2014using}. 

In Bayesian optimization, $f$ is treated as a realization of a stochastic process, denoted by $Z$. Usually, people assume that $Z$ is a Gaussian process. Every Bayesian optimization algorithm defines a sequential sampling procedure, which successively generates new input points, based on the acquired function evaluations over all previous input points. Usually, the next input point is determined by maximizing an acquisition function. Examples of acquisition functions include probability of improvement \citep{kushner1964new}, expected improvement \citep{huang2006global,jones1998efficient,mockus1978application,picheny2013quantile},  Gaussian process upper confidence bound \citep{azimi2010batch,contal2013parallel,desautels2014parallelizing,srinivas2009gaussian}, entropy search \citep{hennig2012entropy}, predictive entropy search \citep{hernandez2014predictive}, entropy search portfolio \citep{shahriari2014entropy}, knowledge gradient \citep{scott2011correlated,wu2016parallel,wu2017bayesian}, etc. We refer to \cite{frazier2018tutorial,shahriari2016taking} for an introduction to popular Bayesian optimization methods.

Although Bayesian optimization has received considerable attention and numerous techniques have emerged in recent years, how to quantify the uncertainty of the outputs from a Bayesian optimization algorithm is rarely discussed in the literature. Since we assume that $f$ is a \textit{random realization} of $Z$, $x_{max}$ and $f(x_{max})$ should also be random. However, the highly nontrivial distributions of $x_{max}$ and $f(x_{max})$ make uncertainty quantification rather challenging. 

In this work, we develop efficient methods to construct confidence regions of $x_{max}$ and $f(x_{max})$ for Bayesian optimization algorithms, where function $f$ is a realization of Gaussian process $Z$. Our uncertainty quantification method \textit{does not} rely on the specific formulae or strategies, and can be applied to all existing methods in an abstract sense. We show that by using the collected data of any instance algorithm of Bayesian optimization, Algorithm \ref{alg:upper} gives a confidence upper limit with theoretical guarantees of their confidence level in Corollary \ref{coro:sequential}. To the best of our knowledge, this is the \textit{first} theoretical result of the uncertainty quantification on the maximum estimator for Bayesian optimization, under the assumption that $f$ is a realization of a Gaussian process. Compared with the traditional point-wise predictive standard deviation of Gaussian process regression, denoted by $\sigma(x)$, our bound is only inflated by a factor proportional to $\sqrt{\log (e \sigma/\sigma(x))}$, where $\sigma$ is the prior standard deviation.

It is worth noting that uncertainty quantification typically differs from convergence analysis of algorithms. In Bayesian optimization, the latter topic has been studied more often. See, for instance, \cite{bect2016supermartingale,calvin1997average,calvin2005one,ryzhov2016convergence,vazquez2010convergence,yarotsky2013univariate}. These analyses provide theoretical guarantee on the convergence of Bayesian optimization algorithms, but do not directly lead to techniques for uncertainty quantification. Recall that in this work, we assume that the underlying function $f$ is a realization of a Gaussian process, and therefore, the sample path properties of $f$, such as the smoothness, should be governed by the covariance function of the Gaussian process. This Gaussian process assumption differs from those in some existing works in the analysis of Bayesian optimization, e.g., \cite{bull2011convergence,astudillo2019bayesian,yarotsky2013univariate}, where the underlying function $f$ is assumed to be a \textit{deterministic} function satisfying pre-specified smoothness conditions.

The rest of this paper is structured as follows. In Section \ref{sec:pre}, we present some preliminaries, including an introduction to Gaussian process regression and Bayesian optimization. Section \ref{sec:fixeddesign} presents uncertainty quantification results under fixed designs. Section \ref{Sec:UQ} introduces our methods and main theoretical results. How to calibrate the constant in our method is introduced in Section \ref{Sec:K}. Numerical results are presented in Section \ref{Sec:numexp}. Conclusions and discussion are made in Section \ref{sec:dis}. Technical details are given in the Appendix.

\section{PRELIMINARIES}\label{sec:pre}
In this section, we provide a brief introduction to Gaussian process regression and review some existing methods in Bayesian optimization.

\subsection{Gaussian Process Regression} 
Recall that in Bayesian optimization, the objective function $f$ is assumed to be a realization of a Gaussian process $Z$. In this work, we suppose that $Z$ is stationary and has mean zero, variance $\sigma^2$ and correlation function $\Psi$, i.e., $\text{Cov}(Z(x),Z(x'))=\sigma^2\Psi(x-x')$ with $\Psi(0)=1$. Under certain regularity conditions, Bochner's theorem \citep{wendland2004scattered} suggests that the Fourier transform (with a specific choice of the constant factor) of $\Psi$, denoted by $\tilde{\Psi}$, is a probability density function and satisfies the inversion formula $\Psi(x)=\int_{\mathbb{R}^p}\cos(\omega^T x)\tilde{\Psi}(\omega) d \omega$. We call $\tilde{\Psi}$ the \textit{spectral density} of $\Psi$. Some popular choices of correlation functions and their spectral densities are discussed in Section \ref{Sec:A0}. Throughout this work, we further assume $\Psi$ satisfies the following condition. For a vector $\omega=(\omega_1,\ldots,\omega_p)^T$, define its $l_1$-norm as $\|\omega\|_1=|\omega_1|+\ldots+|\omega_p|$.
\begin{condition}\label{C1}
The correlation function $\Psi$ has a spectral density, denoted by $\tilde{\Psi}$, and
\begin{eqnarray}\label{A0}
    A_0=\int_{\mathbb{R}^p} \|\omega\|_1\tilde{\Psi}(\omega) d \omega<+\infty. 
\end{eqnarray}
\end{condition}

\begin{remark}
The $l_1$-norm in \eqref{A0} can be replaced by the usual Euclidean norm. However, we use the former here because they usually have explicit expressions. See Section \ref{Sec:A0} for details.
\end{remark}

\begin{remark}
Condition \ref{C1} imposes a smoothness condition on the correlation function $\Psi$, which is equivalent to the mean squared differentiability \citep{stein2012interpolation} of the Gaussian process $Z$. Note that the mean squared differentiability differs from the sample path differentiability. We refer to \cite{driscoll1973reproducing,steinwart2019convergence} for results on the relationship between the sample path smoothness of $Z$ (thus $f$) and the smoothness of correlation function $\Psi$. 
\end{remark}

Suppose the set of points $X=(x_1,\ldots,x_n)$ is given. Then $f$ can be reconstructed via Gaussian process regression. Let $Y=(Z(x_1),\ldots,Z(x_n))^T$ be the vector of evaluations of the Gaussian process at points $x_1,...,x_n$. The following results are well-known and can be found in \cite{rasmussen2006gaussian}. For any untried point $x$, conditional on $Y$, $Z(x)$ follows a normal distribution. The conditional mean and variance of $Z(x)$ are
\begin{align}
\mu(x) :=& \mathbb{E}[Z(x)|Y]=r^T(x) K^{-1} Y,\label{mean}\\
\sigma^2(x) :=& \text{Var}[Z(x)|Y]=\sigma^2(1-r^T(x) K^{-1} r(x)),\label{variance}
\end{align}
where $r(x)=(\Psi(x-x_1),\ldots,\Psi(x-x_n))^T, K=(\Psi(x_j-x_k))_{j k}$. Since we assume that $f$ is a realization of $Z$, $\mu(x)$ can serve as a reconstruction of $f$. 

\subsection{Bayesian Optimization}\label{Sec:existing}
In Bayesian optimization, we evaluate $f$ over a set of input points, denoted by $x_1,\ldots,x_n$. We call them the \textit{design points}, because these points can be chosen according to our will. There are two categories of strategies to choose design points. We can choose all the design points before we evaluate $f$ at any of them. Such a design set is call a \textit{fixed design}. An alternative strategy is called \textit{sequential sampling}, in which the design points are not fully determined at the beginning. Instead, points are added sequentially, guided by the information from the previous input points and the corresponding acquired function values. An instance algorithm defines a sequential sampling scheme which determines the next input point $x_{n+1}$ by maximizing an \textit{acquisition function} $a(x;X_n,Y_n)$, where $X_n=(x_1,\ldots,x_n)$ consists of previous input points, and $Y_n =(f(x_1),\ldots,f(x_n))^T$ consists of corresponding outputs. The acquisition function can be either deterministic or random given $X_n,Y_n$. A general Bayesian optimization procedure under sequential sampling scheme is shown in Algorithm \ref{alg:bayesopt}.

\begin{algorithm}
\caption{Bayesian optimization (described in \cite{shahriari2016taking})}
\begin{algorithmic}[1]
\State \textbf{Input:} A Gaussian process prior of $f$, initial observation data $X_1,Y_1$. 
\State \textbf{for} $n=1,2\ldots,$ \textbf{do}\\
Find $x_{n+1} = \argmax_{x\in \Omega}a(x;X_n,Y_n)$, evaluate $f(x_{n+1})$, update data and the posterior probability distribution on $f$.
\State \textbf{Output: } The point evaluated with the largest $f(x)$.
\end{algorithmic}\label{alg:bayesopt}
\end{algorithm}

A number of acquisition functions are proposed in the literature, for example: 
\begin{enumerate}[leftmargin=*]
    \item Expected improvement (EI) \citep{jones1998efficient,mockus1978application}, with the acquisition function $a_{\text{EI}}(x;X_n,Y_n) := \EE((Z(x) - y_n^*) \mathbf{1}(Z(x) - y_n^*)|X_n, Y_n),$ where $\mathbf{1}(\cdot)$ is the indicator function, and $y_n^* = \max_{1\leq i\leq n} f(x_i)$.
    \item Gaussian process upper confidence bound \citep{srinivas2009gaussian}, with the acquisition function $a_{\text{UCB}}(x;X_n,Y_n) := \mu_n(x) + \beta_n \sigma_n(x)$, where $\beta_n$ is a parameter, and $\mu_n(x)$ and $\sigma_n(x)$ are posterior mean and variance of $f$ after $n$th iteration, respectively.
    \item Predictive entropy search \citep{hernandez2014predictive}, with the acquisition function $a_{\text{PES}}(x;X_n,Y_n) := \mathcal{H}(y|X_n,Y_n,x)-\EE_{p(x_{max}|X_n,Y_n)}\mathcal{H}(y|X_n,Y_n,x,x_{max})$, where $\mathcal{H}(y|X_n,Y_n,x)$ and $\mathcal{H}(y|X_n,Y_n,x,x_{max})$ are the differential entropy of the posterior distribution $p(y|X_n,Y_n,x)$ and $p(y|X_n,Y_n,x,x_{max})$, respectively, and the expectation can be approximated via Thompson samples.
    Another entropy search acquisition function is introduced by \cite{hennig2012entropy}, who also provide an efficient way to approximate the distribution of $x_{max}$ based on the Gaussian process prior.
\end{enumerate}
Among the above acquisition functions, $a_{\text{EI}}$ and $a_{\text{UCB}}$ are deterministic functions of $(x,X_n,Y_n)$, whereas $a_{\text{PES}}$ is random in practice because Thompson sampling depends on a random sample from the posterior Gaussian process. We refer to \cite{shahriari2016taking} for general discussions and popular methods in Bayesian optimization.

In practice, one also needs to determine when to stop Algorithm \ref{alg:bayesopt}. Usually, decisions are made in consideration of the budget and the accuracy requirement. For instance, practitioners can stop Algorithm \ref{alg:bayesopt} after finishing a fixed number of iterations \citep{frazier2018tutorial} or no further significant improvement of function values can be made \citep{acerbi2017practical}. Although stopping criteria plays no role in the analysis of the algorithms' asymptotic behaviors, they can greatly affect the output uncertainty.

\section{OPTIMIZATION WITH GAUSSIAN PROCESS REGRESSION UNDER FIXED DESIGNS}\label{sec:fixeddesign}


Before investigating the more important sequential sampling schemes, we shall first consider fixed designs in this section, because the latter situation is simpler and will serve as an important intermediate step to the general problem in Section \ref{Sec:UQ}.

\subsection{Uniform Error Bound}


Although the conditional distribution of $Z(x)$ is simple as shown in (\ref{mean})-(\ref{variance}), those for $x_{max}$ and $Z(x_{max})$ are highly non-trivial because they are nonlinear functionals of $Z$. In this work, we construct confidence regions for the maximum points and values using a uniform error bound for Gaussian process regression, given in Theorem \ref{thm:undersmooth}. 
We will use the notion $a\vee b:=\max(a,b)$. Also, we shall use the convention $0/0=0$ in all statements in this article related to error bounds.


\begin{theorem}\label{thm:undersmooth}
Suppose Condition \ref{C1} holds. Let $M=\sup_{x\in \Omega}\frac{Z(x) - \mu(x)}{\sigma(x)\log^{1/2} (e \sigma/\sigma(x))}$, where $\mu(x)$ and $\sigma(x)$ are given in (\ref{mean}) and (\ref{variance}), respectively.  Then the followings are true.
\begin{enumerate}[leftmargin=*]
\item $\mathbb{E}M\leq C_0 \sqrt{p(1 \vee \log(A_0 D_\Omega))}$, where $C_0$ is a universal constant, $A_0$ is as in Condition \ref{C1}, and $D_\Omega = \text{diam}(\Omega)$ is the Euclidean diameter of $\Omega$. 
\item For any $t > 0$, $\mathbb{P}(M-\mathbb{E}M>t)\leq e^{-t^2/2}.$
\end{enumerate}
\end{theorem}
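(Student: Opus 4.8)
The plan is to recognize $M$ as the supremum of a centered Gaussian process and then lean on the two standard pillars of Gaussian process theory: concentration (Borell--TIS) for the tail bound in part~2, and a chaining/entropy estimate for the expectation in part~1. First I would note that the residual field $W(x) := Z(x) - \mu(x) = Z(x) - r^T(x)K^{-1}Y$ is a linear functional of $Z$, hence a centered Gaussian process, and that $g(x) := \sigma(x)\log^{1/2}(e\sigma/\sigma(x))$ is deterministic; consequently $\tilde W(x) := W(x)/g(x)$ is itself a centered Gaussian process with $M = \sup_{x\in\Omega}\tilde W(x)$. Its pointwise variance is $\operatorname{Var}(\tilde W(x)) = \sigma^2(x)/g^2(x) = 1/\log(e\sigma/\sigma(x))$, and since conditioning never increases variance we have $\sigma(x)\le\sigma$, so $e\sigma/\sigma(x)\ge e$ and $\operatorname{Var}(\tilde W(x))\le 1$ for every $x$ (at design points the convention $0/0=0$ makes $\tilde W$ vanish). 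Part~2 is then immediate: applying the Borell--Tsirelson--Ibragimov--Sudakov inequality to $\tilde W$, whose supremal variance is at most $1$, yields $\mathbb{P}(M - \mathbb{E}M > t)\le \exp(-t^2/2)$ for all $t>0$.

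For part~1 I would invoke Dudley's entropy bound, $\mathbb{E}M \le C\int_0^\infty \sqrt{\log N(\Omega, d, \epsilon)}\,d\epsilon$, where $d(x,x')=(\mathbb{E}(\tilde W(x)-\tilde W(x'))^2)^{1/2}$ is the canonical metric; since $d(x,x')\le (\operatorname{Var}\tilde W(x))^{1/2}+(\operatorname{Var}\tilde W(x'))^{1/2}\le 2$, the integral effectively runs over $(0,1)$. The regularity needed to bound the entropy comes from Condition~\ref{C1}: writing $\Psi(0)-\Psi(h)=\int(1-\cos(\omega^Th))\tilde\Psi(\omega)\,d\omega$ and using $|1-\cos u|\le|u|$ gives the Lipschitz estimate $1-\Psi(h)\le A_0\|h\|_2$, whence $\mathbb{E}(Z(x)-Z(x'))^2\le 2\sigma^2 A_0\|x-x'\|_2$. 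Because $W(x)-W(x')$ is the residual of the $L^2$-projection of $Z(x)-Z(x')$ onto $\operatorname{span}(Y)$ and projections are contractions, the same bound controls $\mathbb{E}(W(x)-W(x'))^2$ and hence the modulus of continuity of $\sigma(\cdot)$. The objective is to convert these estimates into a covering-number bound of the form $\log N(\Omega,d,\epsilon)\lesssim p\log(A_0 D_\Omega/\epsilon)$, so that the truncated Dudley integral evaluates to $C_0\sqrt{p(1\vee\log(A_0 D_\Omega))}$, reproducing both the $\sqrt{p}$ dependence and the diameter-dependent logarithm.

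The hard part will be controlling the canonical metric near the design points, where the normalization $1/g(x)$ is singular because $\sigma(x)\to 0$: a naive product-rule split of $\tilde W(x)-\tilde W(x')$ produces a factor $1/\sigma^2(x)$ that cannot be bounded uniformly, so the increment estimate above does not directly close. The natural remedy---and, I suspect, the very reason for the precise $\log^{1/2}(e\sigma/\sigma(x))$ weight---is to stratify $\Omega$ into the dyadic level sets $\Omega_k=\{x:\sigma(x)\in(2^{-k-1}\sigma,\,2^{-k}\sigma]\}$. On each stratum $g$ is comparable to the constant $2^{-k}\sigma\sqrt{k+1}$, so $\sup_{\Omega_k}\tilde W$ reduces to a constant-scaled supremum of the genuine Gaussian field $W$, which the Lipschitz increment bound handles through Dudley; crucially, $\operatorname{Var}(\tilde W)\approx 1/(k+1)$ decays across strata, giving exactly the room needed to take the maximum over the (possibly infinitely many) levels at the cost of only a universal constant. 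Making this decomposition rigorous---reconciling the boundary effects between adjacent strata and the accumulation of infinitely many strata as one approaches a design point---is where the bulk of the work lies, and is the analysis I would adapt from \cite{wang2019prediction}.
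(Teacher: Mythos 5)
Your proposal is correct and follows essentially the same route as the paper: Part 2 is proved identically (Borell--TIS applied to the normalized residual process, whose pointwise variance $1/\log(e\sigma/\sigma(x))\le 1$), and Part 1 uses exactly the paper's key idea of stratifying $\Omega$ into geometric level sets of $\sigma(\cdot)$, chaining on each stratum, and exploiting the decay of the normalized variance across strata to absorb the union over infinitely many levels --- this is precisely how the paper combines its per-stratum bound (Theorem \ref{ourthmjasa}) with a geometric-series tail summation.

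The one genuine difference is in how the canonical metric of the residual process is controlled on a stratum. The paper bounds $\mathbb{E}(W(x)-W(x'))^2$ through the native-space (RKHS) machinery of scattered-data approximation (Theorem 11.4 of Wendland), obtaining $\mathfrak{d}(x,x')^2 \lesssim A_0^{1/2}\sigma\sigma_A\|x-x'\|^{1/2}$, where the factor $\sigma_A$ reflects the accuracy of kriging on the stratum. You instead use that $W = (I-P)Z$ for the $L^2$-projection $P$ onto the span of the observations, so that $\mathbb{E}(W(x)-W(x'))^2 \le \mathbb{E}(Z(x)-Z(x'))^2 \le 2\sigma^2 A_0\|x-x'\|$; the stratum-dependent scale then enters only through the diameter cutoff $D\le 2\sigma_A$ in the entropy integral rather than through the metric itself. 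Both bounds yield an entropy integral of the same form, $\sqrt{p}\,\sigma_A\bigl(\sqrt{\log(A_0D_\Omega)}+\sqrt{\log(e\sigma/\sigma_A)}\bigr)$, after which the weight $\sigma(x)\log^{1/2}(e\sigma/\sigma(x))$ cancels the stratum scale exactly as you describe. Your contraction argument is more elementary and avoids the Wendland machinery entirely, at no loss in the final bound; what remains to make it a complete proof is only the bookkeeping you already flag (comparability of the weight on each stratum and the summation over strata), which mirrors the paper's computation line by line.
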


In practice, Part 2 of Theorem \ref{thm:undersmooth} is hard to use directly because $\mathbb{E}M$ is difficult to calculate accurately. Instead, we can replace $\mathbb{E}M$ by its upper bound in Part 1 of Theorem \ref{thm:undersmooth}. We state such a result in Corollary \ref{coro:practice}. Its proof is trivial.

\begin{corollary}\label{coro:practice}
Under the conditions and notation of Theorem \ref{thm:undersmooth}, for any constant $C$ such that $\mathbb{E}M\leq C \sqrt{p(1 \vee \log(A_0 D_\Omega))}$, we have that for any $t>0$,
$$\mathbb{P}(M-C \sqrt{p(1 \vee \log(A_0 D_\Omega))}>t)\leq e^{-t^2/2}. $$
\end{corollary}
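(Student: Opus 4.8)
The plan is to derive this directly from Part 2 of Theorem \ref{thm:undersmooth} by a one-line comparison of events, exploiting the fact that replacing $\mathbb{E}M$ by any larger centering constant can only shrink the upward-deviation event. To set up notation I would write $B := C\sqrt{p(1 \vee \log(A_0 D_\Omega))}$ for the centering quantity appearing in the statement. By the hypothesis imposed on $C$, namely $\mathbb{E}M \leq B$, we have the pointwise inequality $M - B \leq M - \mathbb{E}M$ everywhere on the sample space.

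From this pointwise inequality I would deduce the event inclusion $\{M - B > t\} \subseteq \{M - \mathbb{E}M > t\}$ for every $t > 0$: whenever the deviation centered at $B$ exceeds $t$, the deviation centered at the smaller quantity $\mathbb{E}M$ is at least as large and hence also exceeds $t$. Taking probabilities and using monotonicity gives $\mathbb{P}(M - B > t) \leq \mathbb{P}(M - \mathbb{E}M > t)$, and the right-hand side is bounded by $e^{-t^2/2}$ by Part 2 of Theorem \ref{thm:undersmooth}. Chaining the two bounds yields precisely the claimed inequality $\mathbb{P}(M - B > t) \leq e^{-t^2/2}$.

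I do not expect any genuine obstacle here, consistent with the remark that the proof is trivial: all the analytic content — the sub-Gaussian concentration of $M$ about its mean and the quantitative bound $\mathbb{E}M \leq C_0\sqrt{p(1 \vee \log(A_0 D_\Omega))}$ on the center — has already been supplied by Theorem \ref{thm:undersmooth}. The corollary merely trades the practically incomputable mean $\mathbb{E}M$ for a known upper bound, which is legitimate exactly because the tail bound concerns \emph{upward} deviations, so overestimating the center can only make such exceedances rarer. The argument is therefore a matter of bookkeeping rather than estimation.
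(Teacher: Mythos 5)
Your proof is correct and is precisely the argument the paper intends when it declares the proof of Corollary \ref{coro:practice} trivial: the hypothesis $\mathbb{E}M \leq C\sqrt{p(1 \vee \log(A_0 D_\Omega))}$ gives the event inclusion $\{M - C\sqrt{p(1 \vee \log(A_0 D_\Omega))} > t\} \subseteq \{M - \mathbb{E}M > t\}$, and Part 2 of Theorem \ref{thm:undersmooth} finishes. Nothing is missing; the paper itself omits the argument as trivial, and yours is the standard one-line derivation.
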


To use Corollary \ref{coro:practice}, we need to determine the universal constant $C$ and the moment of the spectral density $A_0$. According to our numerical simulations in Section \ref{Sec:K} and Section \ref{App:simu} of the Supplementary material, we recommend using $C=1$ in practice. We shall discuss the calculation of $A_0$ in Section \ref{Sec:A0}. 



\subsection{Uncertainty Quantification}

In light of Corollary \ref{coro:practice}, we can construct a confidence upper limit of $f$. Algorithm \ref{alg:upperfix} describes how to compute the confidence upper limit of $f$ at a given untried $x$. For notational simplicity, we regard the dimension $p$, the variance $\sigma^2$, the moment $A_0$ and the universal constant $C$ as global variables so that Algorithm \ref{alg:upperfix} has access to all of them. 

\begin{algorithm}
\caption{Uniform confidence upper limit at a given point: \AlgUpper $(x,t,X,Y)$}
\begin{algorithmic}[1]
\State \textbf{Input}: Untried point $x$, significance parameter $t$, data $X=(x_1,\ldots,x_n)^T, Y$.
\State Set $r=(\Psi(x-x_1),\ldots,\Psi(x-x_n))^T, K=(\Psi(x_j-x_k))_{j k}$. Calculate
$
\mu=r^T K^{-1} Y,
s =\sqrt{\sigma^2(1-r^T K^{-1} r)}.
$
\State \textbf{Output}: $\mu+s\sqrt{\log (e\sigma/s)}(C \sqrt{p(1 \vee \log(A_0 D_\Omega))}+t).$
\end{algorithmic}\label{alg:upperfix}
\end{algorithm}


Based on the \AlgUpper ~function in Algorithm \ref{alg:upper}, we can construct a confidence region for $x_{max}$ and a confidence interval for $f(x_{max})$. These regions do not have explicit expressions. However, they can be approximated by calling \AlgUpper ~with many different $x$'s. Let $Y=(f(x_1),\ldots,f(x_n))^T$.
The confidence region for $x_{max}$ is defined as
\begin{eqnarray}\label{CR}
    CR_t:=\left\{x\in\Omega:\AlgUpper(x,t,X,Y)\geq \max_{1\leq i\leq n}f(x_i)\right\}.
\end{eqnarray}
The confidence interval for $f(x_{max})$ is defined as 
\begin{eqnarray}\label{CI}
CI_t:=\left[\max_{1\leq i\leq n}f(x_i), \max_{x\in\Omega}\AlgUpper(x,t,X,Y)\right].
\end{eqnarray}


It is worth noting that the probability in Corollary \ref{coro:practice} is \textit{not} a posterior probability. Therefore, the regions given by (\ref{CR}) and (\ref{CI}) should be regarded as frequentist confidence regions under the Gaussian process model, rather than Bayesian credible regions.
Such a frequentist nature has an alternative interpretation, shown in
Corollary \ref{coro:1}. Corollary \ref{coro:1} simply translates Corollary \ref{coro:practice} from the language of stochastic processes to a deterministic function approximation setting, which fits the Bayesian optimization framework better. It shows that $CR_t$ in (\ref{CR}) and $CI_t$ in (\ref{CI}) are confidence region of $x_{max}$ and $f(x_{max})$ with confidence level $1-e^{-t^2/2}$, respectively. In particular, to obtain a $95\%$ confidence region, we use $t=2.448$.

\begin{corollary}\label{coro:1}
Let $C(\Omega)$ be the space of continuous functions on $\Omega$ and $\mathbb{P}_Z$ be the law of $Z$. Then there exists a set $B\subset C(\Omega)$ so that $\mathbb{P}_Z(B)\geq 1-e^{-t^2/2}$ and for any $f\in B$, its maximum point $x_{max}$ is contained in $CR_t$ defined in (\ref{CR}), and $f(x_{max})$ is contained in $CI_t$ defined in (\ref{CI}).
\end{corollary}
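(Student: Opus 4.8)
The plan is to reinterpret Corollary \ref{coro:practice} path by path. First I would unwind the event it controls. Writing $\kappa := C\sqrt{p(1\vee\log(A_0 D_\Omega))}$, the output of $\AlgUpper(x,t,X,Y)$ is exactly
$$U_t(x) := \mu(x) + \sigma(x)\sqrt{\log(e\sigma/\sigma(x))}\,(\kappa + t),$$
with $\mu(x),\sigma(x)$ as in (\ref{mean})--(\ref{variance}) and $Y=(Z(x_1),\ldots,Z(x_n))^T$ (since $s=\sigma(x)$ in Algorithm \ref{alg:upper}). By the definition of $M$ in Theorem \ref{thm:undersmooth}, the inequality $M\le\kappa+t$ is then literally equivalent to the uniform domination $Z(x)\le U_t(x)$ for every $x\in\Omega$ (at the design points, where $\sigma(x_i)=0$, the convention $0/0=0$ makes both sides agree). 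Hence Corollary \ref{coro:practice} with this $\kappa$ says precisely that the sample path of $Z$ lies everywhere below its own confidence upper limit with probability at least $1-e^{-t^2/2}$.

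Second, I would promote this to a statement on $C(\Omega)$. Because Condition \ref{C1} guarantees that $Z$ has almost surely continuous sample paths, $\mathbb{P}_Z$ is a well-defined Borel measure on $C(\Omega)$, and since $\Omega$ is compact the supremum defining $M$ may be taken over a countable dense subset, so $\{M\le\kappa+t\}$ is measurable. I would then set
$$B := \{\, g\in C(\Omega) : g(x)\le \AlgUpper(x,t,X,(g(x_1),\ldots,g(x_n))^T)\ \text{ for all }x\in\Omega \,\}.$$
Since $\mu(x)$ and $\sigma(x)$ are deterministic functionals of the data, evaluating $\AlgUpper$ on a realization $g$ reproduces $U_t$ for that path; thus $B$ is exactly the event $\{M\le\kappa+t\}$, and the previous step gives $\mathbb{P}_Z(B)\ge 1-e^{-t^2/2}$.

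Third, I would verify the two containments for an arbitrary $f\in B$. As $f$ is continuous on the compact set $\Omega$, a maximizer $x_{max}$ exists, and since every design point lies in $\Omega$ we have the trivial bound $f(x_{max})=\max_{x\in\Omega}f(x)\ge\max_{1\le i\le n}f(x_i)$. Membership $f\in B$ gives $f(x_{max})\le \AlgUpper(x_{max},t,X,Y)$. Chaining these yields $\AlgUpper(x_{max},t,X,Y)\ge f(x_{max})\ge\max_i f(x_i)$, which is exactly the defining condition of $CR_t$ in (\ref{CR}), so $x_{max}\in CR_t$. For the interval, the same inequalities give $\max_i f(x_i)\le f(x_{max})\le\AlgUpper(x_{max},t,X,Y)\le\max_{x\in\Omega}\AlgUpper(x,t,X,Y)$, so $f(x_{max})\in CI_t$ as in (\ref{CI}).

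The individual steps are short; the only genuine content is the translation in the second step. The main obstacle I expect is the measure-theoretic bookkeeping: confirming that $\mathbb{P}_Z$ lives on $C(\Omega)$ and that the event $\{M\le\kappa+t\}$ is measurable, i.e. establishing sample-path continuity under Condition \ref{C1} and reducing the uncountable supremum in $M$ to a countable one via separability of $\Omega$. Once that is in place, the result is a direct rewriting of Corollary \ref{coro:practice} together with two one-line inequalities.
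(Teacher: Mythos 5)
Your proof is correct and takes essentially the same route as the paper, which gives no separate proof of Corollary \ref{coro:1} but describes it as a direct translation of Corollary \ref{coro:practice}: the set $B$ is precisely the event $\{M\le\kappa+t\}$ (the sample path lies below the uniform upper limit computed from its own evaluations), and the two containments follow from the chain $\max_i f(x_i)\le f(x_{max})\le \AlgUpper(x_{max},t,X,Y)\le\max_{x\in\Omega}\AlgUpper(x,t,X,Y)$. Your write-up simply makes explicit the measurability and interpolation ($\sigma(x_i)=0$) details that the paper leaves implicit.
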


In practice, the shape of $CR_t$ can be highly irregular and representing the region of $CR_t$ can be challenging. If $\Omega$ is of one or two dimensions, we can choose a fine mesh over $\Omega$ and call \AlgUpper$(x,t,X,Y)$ for each mesh grid point $x$. In a general situation, we suggest calling \AlgUpper$(x,t,X,Y)$ with randomly chosen $x$'s and using the $k$-nearest neighbors algorithm to represent $CR_t$.

\subsection{Calculating $A_0$}\label{Sec:A0}

For an arbitrary $\Psi$, calculation of $A_0$ in (\ref{A0}) can be challenging. Fortunately, for two most popular correlation functions in one dimension, namely the Gaussian and the Mat\'ern correlation functions \citep{rasmussen2006gaussian}, $A_0$ can be calculated in closed form. The results are summarized in Table \ref{tab:kernels}. In Table \ref{tab:kernels}, $\Gamma(\cdot)$ is the Gamma function and $K_\nu(\cdot)$ is the modified Bessel function of the second kind.

\begin{table*}[h]
    \centering
\begin{tabular}{c c c}
\hline
  Correlation family  & Gaussian & Mat\'ern\\
    \hline
    Correlation function & $\exp\{-(x/\theta)^2\}$ & $\frac{1}{\Gamma(\nu)2^{\nu-1}}\left(\frac{2\sqrt{\nu}|x|}{\theta}\right)^\nu K_\nu\left(\frac{2\sqrt{\nu}|x|}{\theta}\right)$\\
    Spectral density & $\frac{\theta}{2\sqrt{\pi}}\exp\{-\omega^2\theta^2/4\}$ & $\frac{\Gamma(\nu+1/2)}{\Gamma(\nu)\sqrt{\pi}}\left(\frac{4\nu}{\theta^2}\right)^\nu\left(\omega^2+\frac{4\nu}{\theta^2}\right)^{-(\nu+1/2)}$\\
    $A_0$ & $\frac{2}{\sqrt{\pi}\theta}$ & $\frac{4\sqrt{\nu}\Gamma(\nu+1/2)}{\sqrt{\pi}(2\nu-1)\theta\Gamma(\nu)}$ for $\nu>1/2$
\\
    \hline
\end{tabular}\caption{Gaussian And Mat\'ern Correlation Families.}\label{tab:kernels}
\end{table*}

For multi-dimensional problems, a common practice is to use \textit{product} correlation functions. Specifically, suppose $\Psi_1,\ldots,\Psi_p$ are one-dimensional correlation functions. Then their product $\Psi(x)=\prod_{i=1}^{p}\Psi(x_i)$ forms a $p$-dimensional correlation function, where $x=(x_1,\ldots,x_p)^T$. If a product correlation function is used, the calculation of $A_0$ is easy. It follows from the elementary properties of Fourier transform that $\tilde{\Psi}(x)=\prod_{i=1}^p\tilde{\Psi}_i(x_i)$. Let $X_i$ be a random variable with probability density function $\Psi_i$. Then $A_0=\sum_{i=1}^p \mathbb{E}|X_i|$, i.e., the value of $A_0$ corresponding to a product correlation function is the sum of those given by the marginal correlation functions. If each $\Psi_i$ is either a Gaussian or Mat\'ern correlation function, then $\mathbb{E}|X_i|$'s can be read from Table \ref{tab:kernels}.

\section{UNCERTAINTY QUANTIFICATION FOR SEQUENTIAL SAMPLINGS}\label{Sec:UQ}

In Bayesian optimization, sequential samplings are more popular, because such approaches can utilize the information from the previous responses and choose new design points in the area which is more likely to contain the maximum points. 
In this section, we present our uncertainty quantification methodology for sequential samplings, as well as the theoretical guarantees.


\subsection{Methodology}\label{subsec:method}
In this subsection, we construct confidence regions for the maximum points and values under sequential sampling scheme, as presented in Algorithm \ref{alg:upper}. In the rest of this work, let $T$ be the number of iterations when an instance of Algorithm \ref{alg:bayesopt} stops and $D_\Omega$ be the diameter of $\Omega$. It is worth noting that under sequential sampling scheme, $T$ can be a random variable, which introduces additional randomness of the confidence interval, and complicates the theoretical analysis. Given $n$, we denote 
$X_{1:n}=(x_1,\ldots,x_{m_n}),$
where each $x_i$ is corresponding to one data point and $m_n$ is the number of sampled points after $n$ iterations of the algorithm, and $Y_{1:n}=(f(x_1),\ldots,f(x_{m_n}))^T$. In this work, we allow $m_n\geq 1$, which means that we can sample one point or a batch of points at a time in each iteration. Let $g(x)=\AlgUpper(x,t,X_{1:T},Y_{1:T})$,
\begin{align}
    CR_t^{\mathbf{seq}}:=&\left\{x\in\Omega:g(x) \geq \max_{1\leq i\leq m_T}f(x_i)\right\},\label{seqR}\\
CI_t^{\mathbf{seq}}:=&\left[\max_{1\leq i\leq m_T}f(x_i), \max_{x\in\Omega}g(x)\right].\label{seqI}
\end{align}

\begin{algorithm}
\caption{Confidence regions for $x_{\max}$ and $f(x_{\max})$}
\begin{algorithmic}[1]
\State \textbf{Input:} Significance parameter $t$, data $X_{1:T}, Y_{1:T}$ collected from an instance of Bayesian optimization algorithm.
\State For point $x\in \Omega$, set $r(x)=(\Psi(x-x_1),\ldots,\Psi(x-x_{m_T}))^T, K=(\Psi(x_j-x_k))_{j k}$. Calculate
    \begin{eqnarray}
\mu_T(x)&=&r(x)^T K^{-1} Y_{1:T},\label{seqmeanT}\\
s_T(x) &=&\sqrt{\sigma^2(1-r(x)^T K^{-1} r(x))}.\label{seqvarianceT}
\end{eqnarray}\\
Compute ${\rm \AlgUpper}(x,t,X_{1:T},Y_{1:T})$ via Algorithm \ref{alg:upperfix}.
\State Let $g(x)=\AlgUpper(x,t,X_{1:T},Y_{1:T})$. Calculate
    $CR_t^{\mathbf{seq}}$ and $CI_t^{\mathbf{seq}}$ via \eqref{seqR} and \eqref{seqI}, respectively.

\State \textbf{Output: } The confidence region $CR_t^{\mathbf{seq}}$ for $x_{max}$ and the confidence interval $CI_t^{\mathbf{seq}}$ for $f(x_{max})$.
\end{algorithmic}\label{alg:upper}
\end{algorithm}

In Section \ref{sec:theory}, we will show that under the condition that $f$ is a realization of $Z$, $CR_t^{\mathbf{seq}}$ and $CI_t^{\mathbf{seq}}$ are confidence regions of $x_{max}$ and $f(x_{max})$, respectively, with a simultaneous confidence level at least $1-e^{-t^2/2}$, respectively. In particular, to obtain a $95\%$ confidence region, we use $t=2.448$. The calculation of $A_0$ follows the discussion in Section \ref{Sec:A0}, and we recommend using $C=1$ in practice, as in Algorithm \ref{alg:upperfix}.

\subsection{Theory}\label{sec:theory}

To facilitate our mathematical analysis, we first state the general Bayesian optimization framework in a rigorous manner. Recall that we assume that $f$ is a realization of a Gaussian process $Z$ with correlation function $\Psi$. From this Bayesian point of view, we shall not differentiate $f$ and $Z$ in this section.

Denote the vectors of input and output points in the $n$th iteration as $X_n$ and $Y_n$, respectively. Let $X_{1:n}$ and $Y_{1:n}$ be as in Section \ref{subsec:method}. Because $X_{1:n}$ and $Y_{1:n}$ are random, the data $(X_{1:n},Y^T_{1:n})$ is associated with the $\sigma$-algebra $\mathcal{F}_n$, defined as the $\sigma$-algebra generated by $(X_{1:n},Y^T_{1:n})$. When the algorithm just starts, the data is an empty set, which is associated with the trivial $\sigma$-algebra $\mathcal{F}_0$.
In each sampling-evaluation iteration, a sequential sampling strategy, which determines the next sample point or a batch of points based on the current data, is applied. 
Clearly, such strategy should not depend on unobserved data. After each sampling-evaluation iteration, a stopping criterion is checked and to determine whether to terminate the algorithm. A stopping decision should depend only on the current data and/or prespecified values such as computational budget, and should not depend on unobserved data either. Let $T$ be the number of iterations when the algorithm stops. Then a Bayesian optimization algorithm must satisfy the following conditions.

\begin{enumerate}[leftmargin=*]
    \item Conditional on $\mathcal{F}_{n-1}$, $X_n$ and $Z$ are mutually independent for $n=1,2,\ldots$.
    \item $T$ is a stopping time with respect to the filtration $\{\mathcal{F}_n\}_{n=0}^\infty$. We further require $1\leq T<+\infty, a.s.,$ to ensure a meaningful Bayesian optimization procedure.
\end{enumerate}

We shall establish a generic theory that bounds the uniform prediction error, which can be applied to any instance algorithms of Bayesian optimization. It is worth noting that several literature, including \cite{sniekers2015credible,yoo2016supremum,yang2017non,kuriki2019optimal,azzimonti2019profile,azais2010simultaneous}, investigate uncertainty quantification methods which are not within the Bayesian optimization or sequential sampling scheme, and cannot be directly applied to quantify the uncertainties of outputs of Bayesian optimization. 

In Bayesian optimization, sequential samplings are more popular, because such approaches can utilize the information from the previous responses and choose new design points in the area which is more likely to contain the maximum points. Similar to Section \ref{sec:fixeddesign}, we first quantify the uncertainty of $Z(\cdot)-\mu_T(\cdot)$. Note that $Z(\cdot)-\mu_T(\cdot)$ is generally \textit{not} a Gaussian process, because in the sequential samplings situation, the stopping time $T$ is random. Nonetheless, an error bound similar to that in Theorem \ref{thm:undersmooth} is still valid. 
In the following theorem, we define
\begin{align}
\mu_n(x) :=& r^T_n(x) K^{-1}_n Y_{1:n},\label{seqmean}\\
\sigma^2_n(x) :=& \sigma^2(1-r^T_n(x) K_n^{-1} r_n(x)),\label{seqvariance}
\end{align}
where $r_n(x)=(\Psi(x-x_1),\ldots,\Psi(x-x_{m_n}))^T, K_n=(\Psi(x_j-x_k))_{j k}$. 

\begin{theorem}\label{thm:sequential}
{\rm \textbf{(Uncertainty quantification for sequential samplings)}} Suppose Condition \ref{C1} holds. Given an instance of Bayesian optimization algorithm, let
$$M_n=\sup_{x\in \Omega}\frac{Z(x) - \mu_n(x)}{\sigma_n(x)\log^{1/2} (e \sigma/\sigma_n(x))},$$ where $\mu_n(x)$ and $\sigma_n(x)$ are given in \eqref{seqmean} and \eqref{seqvariance}, respectively. Then for any $t>0$, 
\begin{eqnarray}\label{boundsequential}
\mathbb{P}(M_T-C \sqrt{p(1 \vee \log(A_0 D_\Omega))}>t)\leq e^{-t^2/2},
\end{eqnarray}
where $C,A_0,D_\Omega$ are the same as in Corollary \ref{coro:practice}.
\end{theorem}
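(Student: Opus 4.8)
The plan is to reduce the sequential statement to the fixed-design bound of Corollary \ref{coro:practice} by conditioning on the filtration $\{\mathcal{F}_n\}$ and then summing over the value of the stopping time $T$. Throughout, write $c:=C\sqrt{p(1\vee\log(A_0 D_\Omega))}$ for the centering constant in \eqref{boundsequential}, and note that $c$ depends only on $p$, $A_0$ and $D_\Omega$, not on any design. The heart of the argument is a structural claim: conditional on $\mathcal{F}_n$, the process $Z$ is a Gaussian process with mean $\mu_n$ and covariance equal to the posterior covariance of the prior GP at the realized design $X_{1:n}$. In words, adaptive, data-dependent sampling produces the same posterior as if the realized design had been fixed in advance, so that conditionally on $\mathcal{F}_n$ we are genuinely back in the fixed-design setting of Section 2.

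First I would establish this claim by induction on $n$ via the tower property. The case $n=0$ is the prior $\mathrm{GP}(0,\sigma^2\Psi)$. For the inductive step, suppose $Z\mid\mathcal{F}_{n-1}$ is $\mathrm{GP}(\mu_{n-1},k_{n-1})$. The ABO requirement that, conditional on $\mathcal{F}_{n-1}$, the new input points $X_n$ are independent of $Z$ implies that further conditioning on $X_n$ does not alter the law of $Z$; conditioning in addition on the observed values $Y_n=Z(X_n)$ is then an ordinary Gaussian update. By consistency of Gaussian conditioning, updating sequentially by $X_1,\dots,X_n$ yields the same law $\mathrm{GP}(\mu_n,k_n)$ as conditioning at once on the full design $X_{1:n}$, closing the induction. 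In particular, given $\mathcal{F}_n$ the error process $Z(\cdot)-\mu_n(\cdot)$ is a centered Gaussian process whose covariance --- and hence whose pointwise variance $\sigma_n^2(x)$ --- depends only on $X_{1:n}$.

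Next I would apply Corollary \ref{coro:practice} conditionally. Given $\mathcal{F}_n$, the variable $M_n$ has the same distribution as the fixed-design quantity $M$ of Theorem \ref{thm:undersmooth} for the realized design $X_{1:n}$ (here one uses that, for jointly Gaussian variables, the prediction error $Z-\mu$ is independent of the data, so the fixed-design $M$ has the same law conditionally on the observations as unconditionally). Since $c$ is design-independent, the resulting bound holds uniformly over realizations:
$$\mathbb{P}(M_n-c>t\mid\mathcal{F}_n)\le e^{-t^2/2}\quad\text{a.s., for every fixed } n.$$
To remove the stopping time, note $M_T=M_n$ on $\{T=n\}$ and $\{T=n\}\in\mathcal{F}_n$, so
$$\mathbb{P}(M_T-c>t)=\sum_{n\ge1}\mathbb{E}\big[\mathbf{1}(T=n)\,\mathbb{P}(M_n-c>t\mid\mathcal{F}_n)\big]\le e^{-t^2/2}\sum_{n\ge1}\mathbb{P}(T=n)=e^{-t^2/2},$$
where the $\mathcal{F}_n$-measurable indicator $\mathbf{1}(T=n)$ was pulled inside the conditional expectation and the final equality uses $1\le T<\infty$ almost surely. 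This is exactly \eqref{boundsequential}.

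I expect the main obstacle to be the structural claim of the second paragraph: showing that data-dependent sampling does not corrupt the Gaussian posterior. The delicate point is that $M_n$ is \emph{not} $\mathcal{F}_n$-measurable, since it involves $Z$ over all of $\Omega$, so one cannot argue by measurability alone; it is precisely the conditional-law identity together with the non-anticipating (conditional independence) property of the policy that licenses the conditional use of Corollary \ref{coro:practice}. Once that identity is secured, the conditional bound is uniform in the design and the summation over $\{T=n\}$ is routine, requiring only that $T$ be finite almost surely.
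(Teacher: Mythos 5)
Your proposal is correct and follows essentially the same route as the paper: your structural claim is precisely the paper's Lemma \ref{thm:indenpendence} (proved there, as in your sketch, by induction using the ABO requirement that $X_n$ is conditionally independent of $Z$ given $\mathcal{F}_{n-1}$), your conditional application of Corollary \ref{coro:practice} is the paper's reduction to a fixed design, and your stopping-time summation is the paper's law-of-total-probability step using $\{T=n\}\in\mathcal{F}_n$. The only difference is presentational: the paper runs the induction on the residual process $Z-\mu_n$, tracking equality in law with an independent copy $Z'-\mathcal{I}_{\Psi,X_{1:n}}Z'$ via the interpolation-operator decomposition of Lemma \ref{lemma:twosets}, whereas you phrase the same step as consistency of sequential versus batch Gaussian conditioning.
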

The proof of Theorem \ref{thm:sequential} can be found in Appendix \ref{sec:pfthm2}. The major difficulty of proving Theorem \ref{thm:sequential} is that the stopping time $T$ is random, which introduces extra uncertainties of the output of a Bayesian optimization algorithm. The probability bound \eqref{boundsequential} has a major advantage: the constant $C$ is independent of the specific Bayesian optimization algorithm, and it can be chosen the same as that for fixed designs. This suggests that when calibrating $C$ via numerical simulations (see Section \ref{Sec:K} and Appendix \ref{App:simu}), we only need to simulate for fixed-design problems, and the resulting constant $C$ can be used for the uncertainty quantification of all past and possible future Bayesian optimization algorithms. The dimension $p$ does not influence our uncertainty quantification a lot, in the sense that only $\sqrt{p}$ appears in \eqref{boundsequential}. However, the dimension will strongly influence the performance of Bayesian optimization \citep{bull2011convergence}, which could lead to a large confidence region.

Analogous to Corollary \ref{coro:1}, we can restate Theorem \ref{thm:sequential} under a deterministic setting in terms of Corollary \ref{coro:sequential}. In this situation, we have to restrict ourselves to \textit{deterministic} instances of Bayesian optimization algorithms, in the sense that the sequential sampling strategy is a deterministic map, such as the first two examples in Section \ref{Sec:existing}.

\begin{corollary}\label{coro:sequential}
Let $C(\Omega)$ be the space of continuous functions on $\Omega$ and $\mathbb{P}_Z$ be the law of $Z$. Given a deterministic instance of Bayesian optimization algorithm, there exists a set $B\subset C(\Omega)$ so that $\mathbb{P}_Z(B)\geq 1-e^{-t^2/2}$ and for any $f\in B$, its maximum point $x_{max}$ is contained in $CR_t^{\mathbf{seq}}$ defined in \eqref{seqR}, and $f(x_{max})$ is contained in $CI_t^{\mathbf{seq}}$ defined in \eqref{seqI}.
\end{corollary}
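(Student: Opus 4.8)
The plan is to mirror the fixed-design argument behind Corollary~\ref{coro:1}, using Theorem~\ref{thm:sequential} in place of Corollary~\ref{coro:practice}, and then to exploit determinism of $(\AlgP,T)$ to pass from a probabilistic statement about the process $Z$ to a deterministic statement about its realizations. Throughout I use the convention of this section that identifies $f$ with $Z$.

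First I would introduce the high-probability event
$$\mathcal{E}=\left\{M_T\le C\sqrt{p(1\vee\log(A_0 D_\Omega))}+t\right\},$$
so that Theorem~\ref{thm:sequential} gives $\mathbb{P}(\mathcal{E})\ge 1-e^{-t^2/2}$. The key step is to show that on $\mathcal{E}$ the random upper limit dominates $Z$ uniformly. Unfolding the definition of $M_T$, for every $x\in\Omega$ with $\sigma_T(x)>0$ the bound defining $\mathcal{E}$ rearranges to
$$Z(x)\le \mu_T(x)+\sigma_T(x)\log^{1/2}(e\sigma/\sigma_T(x))\left(C\sqrt{p(1\vee\log(A_0 D_\Omega))}+t\right)=\AlgUpper(x,t,X_{1:T},Y_{1:T}),$$
and at the sampled points, where $\sigma_T(x)=0$, the convention $0/0=0$ together with $\sigma_T(x)\log^{1/2}(e\sigma/\sigma_T(x))\to 0$ makes the same inequality hold (with equality, by exact interpolation). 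Hence $Z(x)\le\AlgUpper(x,t,X_{1:T},Y_{1:T})$ for all $x\in\Omega$ on $\mathcal{E}$.

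Next I would read off the two containments. Letting $x_{max}$ be a maximizer, the uniform bound at $x_{max}$ gives $\AlgUpper(x_{max},t,X_{1:T},Y_{1:T})\ge f(x_{max})=\max_{x\in\Omega}f(x)\ge\max_{1\le i\le m_T}f(x_i)$, since every sampled $x_i$ lies in $\Omega$; this is precisely the membership condition in (\ref{seqR}), so $x_{max}\in CR_t^{\mathbf{seq}}$. For (\ref{seqI}), the same chain gives the upper endpoint $\max_{x\in\Omega}f(x)\le\max_{x\in\Omega}\AlgUpper(x,t,X_{1:T},Y_{1:T})$, while $\max_{x\in\Omega}f(x)\ge\max_{1\le i\le m_T}f(x_i)$ supplies the lower endpoint, so $f(x_{max})\in CI_t^{\mathbf{seq}}$. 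Thus on $\mathcal{E}$ both statements hold simultaneously.

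Finally I would carry out the translation to the deterministic setting, which is where the restriction to deterministic $(\AlgP,T)$ is essential and which I expect to be the only genuinely delicate point. When $\AlgP$ is a deterministic map and $T$ a deterministic stopping rule, the entire trajectory $(X_{1:T},Y_{1:T})$—and therefore $CR_t^{\mathbf{seq}}$, $CI_t^{\mathbf{seq}}$ and the quantity $M_T$—are measurable functions of the sample path alone, with no auxiliary randomness entering. I would then define $B\subset C(\Omega)$ to be the set of continuous paths $f$ for which the trajectory generated by $(\AlgP,T)$ satisfies $M_T\le C\sqrt{p(1\vee\log(A_0 D_\Omega))}+t$, i.e.\ the image of $\mathcal{E}$ under the identification of $Z$ with its sample path. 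Since $\mathbb{P}_Z$ is the law of $Z$ on $C(\Omega)$ (sample-path continuity being guaranteed by Condition~\ref{C1}), we obtain $\mathbb{P}_Z(B)=\mathbb{P}(\mathcal{E})\ge 1-e^{-t^2/2}$, and the preceding paragraphs show the two containments hold for every $f\in B$. The role of determinism is exactly that it makes $CR_t^{\mathbf{seq}}$ and $CI_t^{\mathbf{seq}}$ functionals of $f$ only; for a random policy the regions would depend on extra sampling randomness and this reinterpretation would break down.
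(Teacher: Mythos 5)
Your proposal is correct and takes essentially the same approach as the paper: the paper offers no separate proof, treating Corollary \ref{coro:sequential} as a direct restatement of Theorem \ref{thm:sequential} in the deterministic setting (exactly as Corollary \ref{coro:1} restates Corollary \ref{coro:practice}), and your argument is precisely that translation made explicit. Your unfolding of the event $\{M_T\le C\sqrt{p(1\vee\log(A_0D_\Omega))}+t\}$ into the two containments, the handling of points with $\sigma_T(x)=0$ via interpolation, and the observation that determinism of $(\AlgP,T)$ makes $M_T$, $CR_t^{\mathbf{seq}}$, and $CI_t^{\mathbf{seq}}$ functionals of the sample path alone (so the event can be identified with a set $B\subset C(\Omega)$) fill in the details the paper leaves implicit.
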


\section{CALIBRATING $C$ VIA SIMULATION STUDIES}\label{Sec:K}
To construct confidence regions \eqref{CR} and \eqref{seqR}, and confidence intervals \eqref{CI} and \eqref{seqI}, we need to specify the constant $C$. An upper bound of the constant $C$ in Theorem \ref{thm:undersmooth} can be obtained by examine the proof of Lemma \ref{thm133} and Theorem \ref{ourthmjasa}. However, this theoretical upper bound can be too large for practical use. In this work, we consider estimating $C$ via numerical simulation. The details are presented in Appendix \ref{App:simu}. Here we outline the main conclusions of our simulation studies.

Our main conclusions are: 1) $C = 1$ is a robust choice for most of the cases; 2) for the cases with Gaussian correlation functions or small $A_0D_\Omega$, choosing $C=1$ may lead to very conservative confidence regions. We suggest practitioners first consider $C=1$ to obtain robust confidence regions. When users believe that this robust confidence region is too conservative, they can use the value in Table \ref{p1tab} or \ref{p2tab} corresponding to their specific setting, or run similar numerical studies as in Appendix \ref{App:simu} to calibrate their own $C$.

\section{NUMERICAL EXPERIMENTS}\label{Sec:numexp}

In this section, we conduct several numerical studies to compare the performance between the proposed confidence interval $CI_t^{\mathbf{seq}}$ as in \eqref{seqI} and the naive bound of Gaussian process. The nominal confidence levels are 95\% for both methods. The naive 95\% confidence upper bound, denoted by $CI_G$, is defined as the usual pointwise upper bound of Gaussian process, i.e., 
\begin{align}\label{eq:naiveci}
    CI_G:=&\left[\max_{1\leq i\leq m_T}f(x_i), \max_{x\in \Omega}\mu_T(x) + q_{0.05}\sigma_T(x) \right],
\end{align}
where $q_{0.05}$ is the 0.95 quantile of the standard normal distribution, $\mu_T(x)$ and $\sigma_T(x)$ are given in \eqref{seqmeanT} and \eqref{seqvarianceT}, respectively. As suggested in Section \ref{Sec:K}, we use $C = 1$ and $t=2.448$ in $CI_t^{\mathbf{seq}}$. 

\subsection{Well-Specified Gaussian Process}\label{subsec:eg1}
We first consider that the underlying truth is a Gaussian process with known covariance function. We consider the Mat\'ern correlation functions (see Table \ref{tab:kernels}) with $\nu = 1.5,2.5,3.5$, and $A_0D_\Omega = 25$. We simulate Gaussian processes on $\Omega =[0,1]^2$ for each $\nu$. We use optimal Latin hypercube designs \citep{stocki2005method} to generate five initial points. We employs $a_{\text{UCB}}$ (see Section \ref{Sec:existing}) as the acquisition function, where the parameter $\beta_n$ is chosen as the theoretically optimal parameter, suggested by \cite{srinivas2009gaussian}.

We repeat the above procedure 100 times to estimate the coverage rate by calculating the relative frequency of the event $f(x_{\max})\in CI_t^{\mathbf{seq}}$ or $f(x_{\max})\in CI_G$. We also compare $CI_t^{\mathbf{seq}}$ and $CI_G$ with the ``optimal upper bound'' in the sense that we choose a constant $a_\nu$ and the confidence upper bound
\begin{align*}
    CI_a:=&\left[\max_{1\leq i\leq m_T}f(x_i), \max_{x\in \Omega}\mu_T(x) + a_\nu\sigma_T(x) \right],
\end{align*}
such that the relative frequency of the event $f(x_{\max})\in CI_a$ is exactly 95\%, where $a_\nu$ only depends on $\nu$. Then we plot the coverage rate of $CI_t^{\mathbf{seq}}$ and $CI_G$, and the width of $CI_t^{\mathbf{seq}}$, $CI_G$, and $CI_a$ under 5, 10, 15, 20, 25, 30 iterations, respectively.

Panels 1 and 2 in Figure \ref{figall} shows the coverage rates and the width of the confidence intervals under different smoothness with $\nu=1.5,2.5,3.5$.
From the Panel 1 in Figure \ref{figall}, we find that the coverage rate of $CI_t^{\mathbf{seq}}$ is almost 100\% for all the experiments, while $CI_G$ has a lower coverage rate no more than $75\%$. Thus the proposed method is conservative while the naive one is permissive. Such a result shows that using the naive method may be risky in practice, because the naive one underestimates the uncertainties. The coverage results support our theory and conclusions made in Section \ref{sec:theory}. As shown by the Panel 2 in Figure \ref{figall}, the widths of $CI_t^{\mathbf{seq}}$ are about five times of $CI_G$, and about 2-2.5 times of $CI_a$. The ratio decreases as the number of iterations increases. The inflation in the width of confidence intervals is the cost of gaining confidence.

\begin{figure*}[h!]
  \centering
  \includegraphics[width=0.45\textwidth]{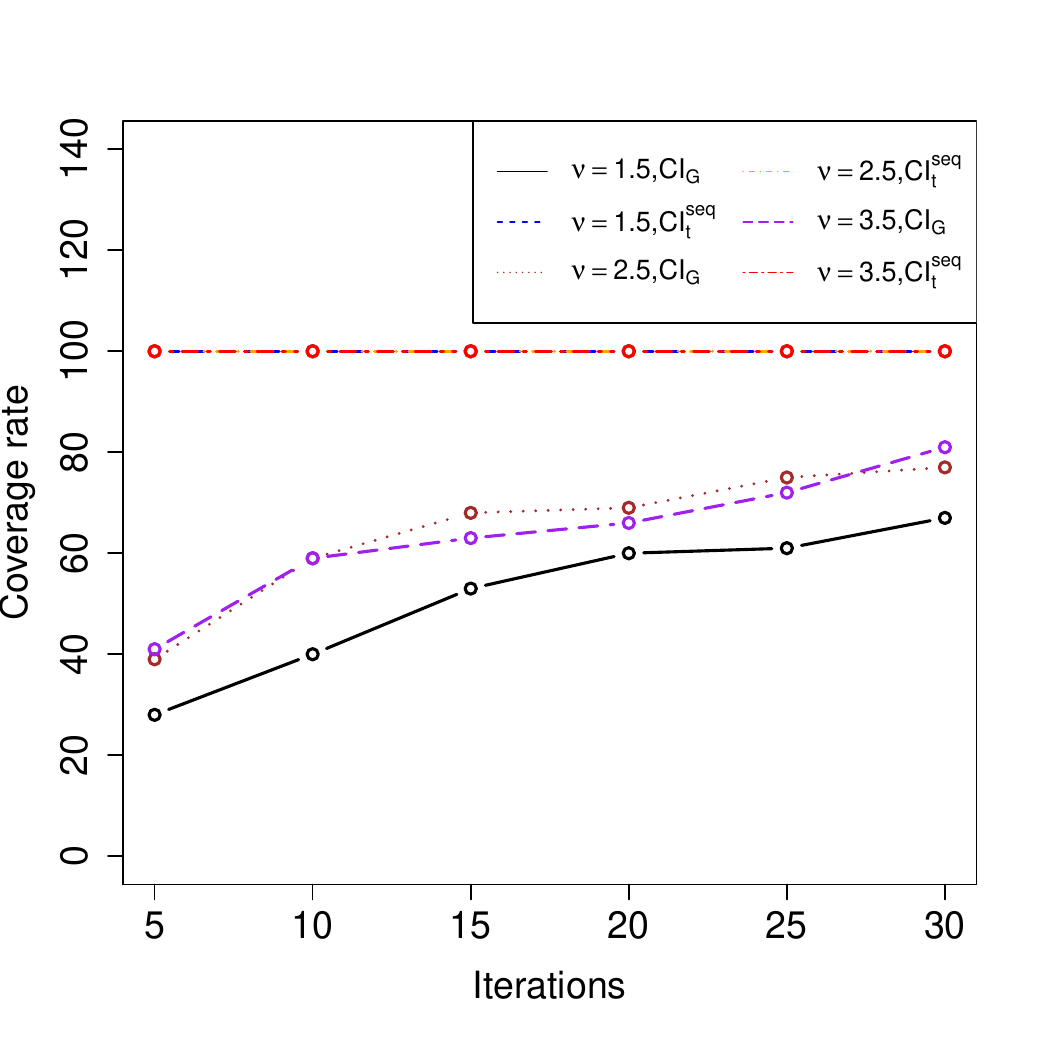}
        \includegraphics[width=0.45\textwidth]{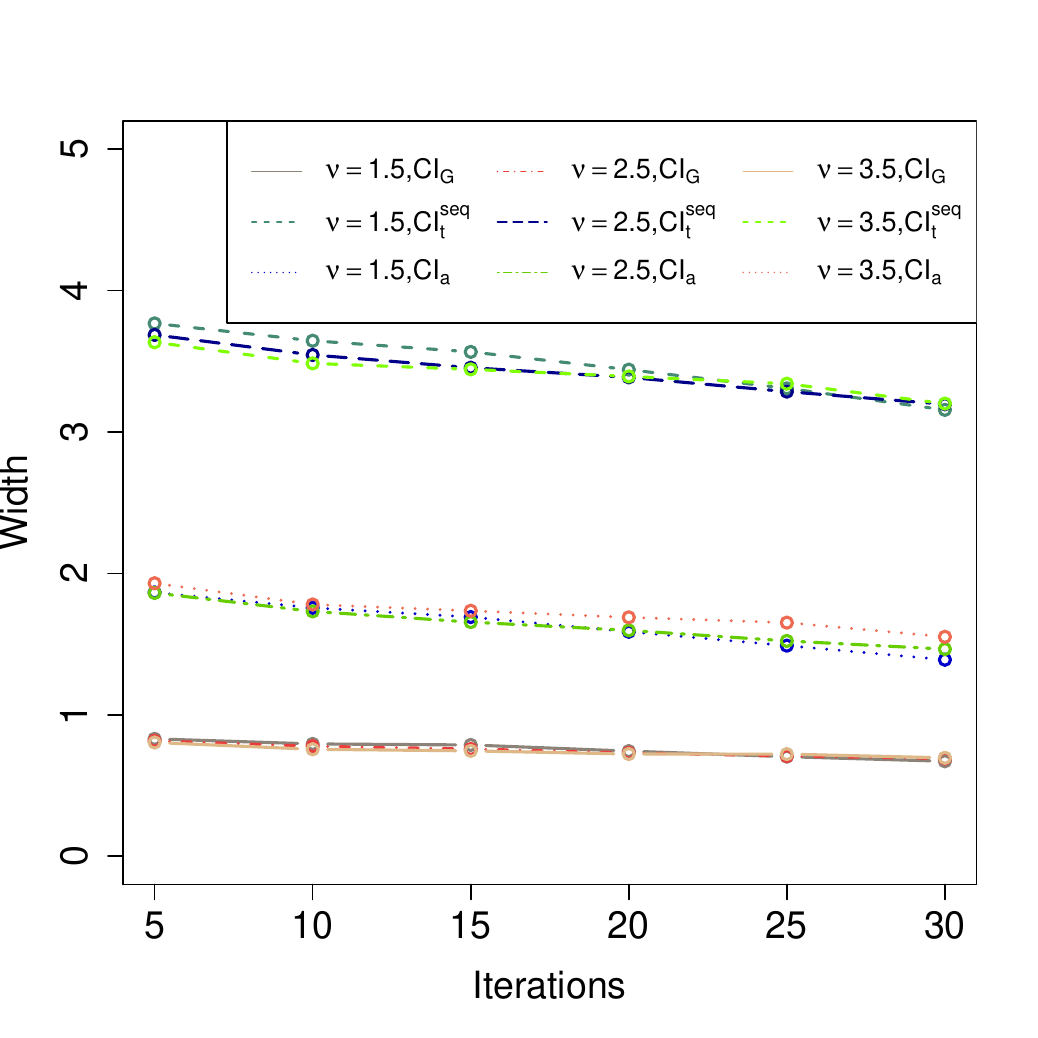}
  \caption{Coverage rates of $CI_t^{\mathbf{seq}}$ and $CI_G$ (Panel 1) and widths of $CI_t^{\mathbf{seq}}$, $CI_G$, and $CI_a$ (Panel 2) under different scenarios. The nominal confidence level is 95\%. The Gaussian process is well specified. More figures of misspecified cases are available in Appendix \ref{app:reGP}.}
  \label{figall}
\end{figure*}

\subsection{Misspecified Gaussian Process}\label{subsec:eg3}
Although our theory does not cover the case when the Gaussian process is misspecified, we conduct numerical studies to study the influence of model misspecification. The misspecification is in the sense that the correlation function is misspecified. Specifically, we consider the Mat\'ern correlation functions with $\nu_0=1.5,2.5,3.5$, and $A_0D_\Omega=25$. The rest of the settings are the same as those in Section \ref{subsec:eg1}. The only difference is that, we use Mat\'ern correlation functions with $\nu=1.5,2.5,3.5$ to construct predictors and confidence intervals for each $\nu_0$. The coverage rates of $CI_t^{\mathbf{seq}}$ and $CI_G$ and the width of $CI_t^{\mathbf{seq}}$, $CI_G$, and $CI_a$ are shown in Panels 1-6 in Figure \ref{figallmis} in Appendix \ref{app:reGP}. We find similar patterns as discussed in Section \ref{subsec:eg1}, namely, the proposed confidence interval is conservative while the naive one is permissive. We also find that for each $\nu_0$, the width of confidence intervals does not change a lot for different $\nu$. These findings indicate that our theory may work for the misspecified case, while the theoretical development needs further study.

We also consider another case of misspecification, where the correlation function used in the prediction is a rational quadratic kernel \citep{rasmussen2006gaussian} defined as
\begin{align*}
    \Psi_{QK}(x,x') = \left(1+\frac{\|x-x'\|^2}{2\alpha\phi}\right)^{-\alpha},
\end{align*}
where $\alpha,\phi>0$ are parameters. Here we consider $\alpha=1$ as in \cite{hennig2012entropy}. We choose $\phi=10$ and $20$. The results are in Figure \ref{figallmisQK} in Appendix \ref{app:reGP}. From Figure \ref{figallmisQK} it can be seen that both $CI_t^{\mathbf{seq}}$ and $CI_G$ do not achieve the nominal level, but $CI_t^{\mathbf{seq}}$ is much closer to the nominal level than the naive one. This indicates that if the misspecification is severe, then it has a strong influence of uncertainty quantification, while our method is more robust than the naive one. 

\subsection{Deterministic Functions}\label{subsec:eg2}
In this subsection, we consider three deterministic functions. Because a deterministic function is no longer random (thus not a Gaussian process), a model misspecification occurs. Furthermore, there is no definition of ``confidence interval'' for a deterministic function. Therefore, we evaluate the confidence intervals $CI_t^{\mathbf{seq}}$ and $CI_G$ by checking whether they cover the optimal point after certain number of iterations.

In both numerical examples in this subsection, we use $a_{\text{UCB}}$ (defined in Section \ref{Sec:existing}) as the acquisition function. The iteration numbers we consider are 5, 10, 15, 20, 25, 30. There are three parameters needed to be specified in the Gaussian process regression: the smoothness parameter $\nu$, the constant $A_0$ in Condition \ref{C1}, and the variance $\sigma^2$. Following the usual approaches in Gaussian process regression \citep{santner2013design}, we impose a specific $\nu$, and estimate $A_0$ and $\sigma^2$ via maximum likelihood estimation based on the initial evaluations of the function values on the initial points. These estimated parameters are used for constructing the prior distribution of underlying functions and evaluation of $a_{\text{UCB}}$ in Algorithm \ref{alg:bayesopt}, and constructing confidence intervals $CI_t^{\mathbf{seq}}$ (by Algorithm \ref{alg:upper}) and $CI_G$ (by \eqref{eq:naiveci}). For the conciseness, we put all the details and numerical results in this subsection to Appendix \ref{app:tablesDeter}, and only list the test functions we used in this section. Here we select three test functions from the Optimization category of Virtual Library of Simulation Experiemnts: Test function and Datasets (http://www.sfu.ca/~ssurjano/optimization.html).

$\bullet$ Modified test function in \cite{higdon2002space}: $$f_1(x)=1.5\sin(2\pi x/2)-0.2\sin(2\pi x/2.5)-(x-1)^2/120,$$ where $x\in [0,8]$. The modification is made because the original function is quite easy to be optimized. 


$\bullet$ The test function in  \cite{keane2008engineering}:
\begin{align*}
    f_3(x) = -(6x-2)^2\sin(12x-4), x\in [0,1].
\end{align*}

$\bullet$ The rescaled form of the Branin-Hoo function \citep{picheny2013benchmark}:
\begin{align*}
    f_4(x) = & -\frac{1}{51.95}\bigg(\left(\bar x_2-\frac{5.1\bar x_1^2}{4\pi^2}+\frac{5\bar x_1}{\pi}-6\right)^2\nonumber\\
    & +\left(10-\frac{10}{8\pi}\right)\cos(\bar x_1)-44.82\bigg),
\end{align*}
where $\bar x_1=15x_1-5, \bar x_2=15x_2$, and $x=(x_1,x_2)^T\in [0,1]^2$. This function has mean zero and variance one.

From Tables \ref{tab:eg2}-\ref{tab:eg5} in Appendix \ref{app:tablesDeter}, we can see that our proposed confidence interval is more robust than the naive confidence interval. Unlike misspecified Gaussian processes, the smoothness plays a more important role in the effectiveness of the confidence intervals. This suggests that when the underlying truth is a deterministic function, this kind of model misspecification has a strong impact on the quality of uncertainty quantification. Robust uncertainty quantification methodologies for deterministic functions will be pursued in the future.

\section{CONCLUSIONS AND DISCUSSION}\label{sec:dis}

In this work, we propose a novel methodology to construct confidence regions for the outputs given by any Bayesian optimization algorithm with theoretical guarantees. To the best of our knowledge, this is the \textit{first} result of this kind. As a cost of its high flexibility, the confidence regions may be somewhat conservative, because they are constructed based on generic probability inequalities that may not be tight enough. Nevertheless, given the fact that naive methods may be highly permissive, the proposed method can be useful when a conservative approach is preferred, such as in reliability assessments. To improve the power of the proposed method, one needs to seek for more accurate inequalities in a future work. One might also need to derive better error bounds tailored to specific acquisition functions and specify the constants in the upper bounds, and find robust confidence intervals, or other uncertainty quantification methods, which can mitigate the impact of model misspecification. Other possible future extensions include considering more surrogate models, such as Decision trees or Tree Parzen estimators.

\subsubsection*{Acknowledgements}
The authors are grateful to all reviewers for their helpful comments and suggestions. Tuo's research is supported by NSF DMS-1914636 and CCF-1934904.

\bibliography{ref}

\appendix

\numberwithin{equation}{section}
\numberwithin{table}{section}
\numberwithin{theorem}{section}
\numberwithin{lemma}{section}
\numberwithin{figure}{section}

\section{INEQUALITIES FOR GAUSSIAN PROCESSES}

In this section, we review some inequalities on the maximum of a Gaussian process. Let $G(x)$ be a separable zero-mean Gaussian process with $x\in \Gamma$. Define the metric on $\Gamma$ by 
\begin{align*}
    \mathfrak{d}_g(G(x_1), G(x_2))=\sqrt{\EE (G(x_1) - G(x_2))^2}.
\end{align*}
The $\epsilon$-covering number of the metric space $(\Gamma,\mathfrak{d}_g)$, denoted as $N(\epsilon,\Gamma,\mathfrak{d}_g)$, is the minimum integer $N$ so that there exist $N$ distinct balls in $(\Gamma,\mathfrak{d}_g)$ with radius $\epsilon$, and the union of these balls covers $\Gamma$. Let $D$ be the diameter of $\Gamma$ with respect to the metric $\mathfrak{d}_g$. The supremum of a Gaussian process is closely tied to a quantity called the \textit{entropy integral}, defined as
\begin{eqnarray}\label{entropy}
\int_0^{D/2} \sqrt{\log N(\epsilon,\Gamma,\mathfrak{d}_g)}d\epsilon.
\end{eqnarray}
For detailed discussion of entropy integral, we refer to \cite{adler2009random}. 

Lemma \ref{thm133} provides an upper bound on the expectation of the maximum value of a Gaussian process, which is Theorem 1.3.3 of \cite{adler2009random}. Note that the right-hand side of \eqref{thm133neq} is an upper bound of Talagrand's majorizing measure \citep{talagrand1996majorizing}. Talagrand's bound, albeit sharper, is not numerically tractable in the current context, because the covariance function of the GP regression posterior is highly nonstationary and complicated. Therefore, we apply Lemma \ref{thm133} in our proofs.
\begin{lemma}\label{thm133}
Let $G(x)$ be a separable zero-mean Gaussian process with $x$ lying in a $\mathfrak{d}_g$-compact set $\Gamma$, where $\mathfrak{d}_g$ is the metric. Let $N$ be the $\epsilon$-covering number. Then there exists a universal constant $\eta$ such that
\begin{align}\label{thm133neq}
    \mathbb{E}\left(\sup_{x\in \Gamma} G(x)\right) \leq \eta\int_0^{D/2} \sqrt{\log N(\epsilon,\Gamma,\mathfrak{d}_g)}d\epsilon.
\end{align}
\end{lemma}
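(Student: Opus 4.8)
The plan is to prove the bound by Dudley's chaining argument. First I would invoke the separability of $G$ to replace the supremum over $\Gamma$ by a supremum over a countable dense subset, and then write that subset as an increasing union of finite sets $\Gamma_0$. By monotone convergence it suffices to prove the inequality with $\Gamma$ replaced by an arbitrary finite $\Gamma_0$ (with the constant $\eta$ not depending on $\Gamma_0$) and pass to the limit. This reduction is what makes every intermediate quantity a maximum of finitely many jointly Gaussian variables, so that the classical Gaussian maximal inequality applies.

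Next I would set up dyadic nets. With $D$ the $\mathfrak{d}_g$-diameter, put $\epsilon_k = D 2^{-k}$, and for each $k\geq 0$ fix a minimal $\epsilon_k$-net $T_k$ so that $|T_k| = N(\epsilon_k,\Gamma,\mathfrak{d}_g)$, together with a nearest-point map $\pi_k:\Gamma_0\to T_k$ satisfying $\mathfrak{d}_g(x,\pi_k(x))\leq \epsilon_k$. Since $T_0$ may be taken to be a single point and $G$ is centered, I would then use the telescoping identity
\begin{align*}
G(x) - G(\pi_0(x)) = \sum_{k\geq 0}\bigl(G(\pi_{k+1}(x)) - G(\pi_k(x))\bigr),
\end{align*}
where the sum is finite because $\pi_k(x)=x$ for $k$ large on the finite set $\Gamma_0$. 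Each increment is a mean-zero Gaussian of standard deviation $\mathfrak{d}_g(\pi_{k+1}(x),\pi_k(x))\leq \epsilon_k+\epsilon_{k+1}=3\epsilon_{k+1}$, and as $x$ ranges over $\Gamma_0$ the level-$k$ increment takes at most $|T_{k+1}|\,|T_k|\leq N(\epsilon_{k+1})^2$ distinct values. Taking suprema and then expectations, and applying $\EE\max_{1\leq i\leq m}\xi_i\leq \sigma\sqrt{2\log m}$ for centered Gaussians of variance at most $\sigma^2$, I would obtain
\begin{align*}
\EE\sup_{x\in\Gamma_0}G(x) \leq \sum_{k\geq 0} 3\epsilon_{k+1}\sqrt{2\log N(\epsilon_{k+1})^2} \leq \eta_0\sum_{k\geq 0}\epsilon_{k+1}\sqrt{\log N(\epsilon_{k+1})}
\end{align*}
for an absolute constant $\eta_0$.

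Finally I would convert this series into the entropy integral using the monotonicity of $\epsilon\mapsto N(\epsilon,\Gamma,\mathfrak{d}_g)$. On the dyadic block $[\epsilon_{k+2},\epsilon_{k+1}]$, which has length $\epsilon_{k+1}/2$, monotonicity gives $\epsilon_{k+1}\sqrt{\log N(\epsilon_{k+1})}\leq 2\int_{\epsilon_{k+2}}^{\epsilon_{k+1}}\sqrt{\log N(\epsilon)}\,d\epsilon$; since these blocks tile $(0,D/2]$, summing over $k$ yields $\sum_{k}\epsilon_{k+1}\sqrt{\log N(\epsilon_{k+1})}\leq 2\int_0^{D/2}\sqrt{\log N(\epsilon)}\,d\epsilon$. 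Collecting the absolute constants into one universal $\eta$ and letting $\Gamma_0\uparrow\Gamma$ completes the argument.

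The hard part will be the chaining bookkeeping: bounding the number of distinct level-$k$ increments by the covering numbers so the union bound is not wasteful, and then passing from the discrete sum to the integral while keeping the constant universal and independent of the net geometry. Making the separability reduction fully rigorous — ensuring all suprema are measurable and that the finite-set bound genuinely transfers to $\Gamma$ — is a secondary technical point that requires care but introduces no new ideas.
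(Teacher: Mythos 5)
The paper does not prove this lemma at all: it is imported as a known result (Theorem 1.3.3 of \cite{adler2009random}), so there is no internal proof to compare yours against. Your proposal is the classical Dudley chaining argument, and it is correct --- indeed it is essentially the proof one finds in the cited reference. All the key ingredients are present and in the right order: separability plus monotone convergence to reduce to a finite $\Gamma_0$; dyadic scales $\epsilon_k = D2^{-k}$ with $N(\epsilon_0,\Gamma,\mathfrak{d}_g)=1$, so the telescoping sum is anchored at a single point whose expectation vanishes; the triangle inequality giving level-$k$ increments of standard deviation at most $3\epsilon_{k+1}$ (here using that $G(s)-G(t)$ is centered Gaussian with variance $\mathfrak{d}_g(s,t)^2$); the union bound over at most $N(\epsilon_{k+1},\Gamma,\mathfrak{d}_g)^2$ distinct increments combined with $\EE\max_{1\leq i\leq m}\xi_i\leq\sigma\sqrt{2\log m}$; and the block-by-block comparison that converts $\sum_{k}\epsilon_{k+1}\sqrt{\log N(\epsilon_{k+1},\Gamma,\mathfrak{d}_g)}$ into $2\int_0^{D/2}\sqrt{\log N(\epsilon,\Gamma,\mathfrak{d}_g)}\,d\epsilon$, with the blocks tiling $(0,D/2]$. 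Two bookkeeping items you flag do need care, but cost only constants: the nets $T_k$ should be taken inside the finite set $\Gamma_0$ (otherwise $\pi_k(x)=x$ for large $k$ can fail), and under the paper's convention that covering balls are centered in the set, covering numbers of a subset are dominated by those of $\Gamma$ only after doubling the radius, i.e.\ $N(2\epsilon,\Gamma_0,\mathfrak{d}_g)\leq N(\epsilon,\Gamma,\mathfrak{d}_g)$; both adjustments are absorbed into the universal constant $\eta$. In short, you have supplied a correct, self-contained proof of a statement the paper treats as a black-box citation.
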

Lemma \ref{thm211}, which is Theorem 2.1.1 of \cite{adler2009random}, presents a concentration inequality.
\begin{lemma}\label{thm211}
Let $G$ be a separable Gaussian process on a $\mathfrak{d}_g$-compact $\Gamma$ with mean zero, then for all $u>0$,
\begin{align}\label{thm211neq}
    \mathbb{P}\left(\sup_{x\in \Gamma} G(x) - \mathbb{E} (\sup_{x\in \Gamma} G(x)) > u\right) \leq e^{-u^2/2\sigma^2_\Gamma},
\end{align}
where $\sigma^2_\Gamma = \sup_{x\in \Gamma} \mathbb{E}G(x)^2$.
\end{lemma}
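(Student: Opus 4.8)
The plan is to prove the inequality by reducing the supremum to a Lipschitz functional of a standard Gaussian vector and then invoking the dimension-free Gaussian concentration (isoperimetric) inequality. First I would use separability to replace $\sup_{x\in\Gamma}G(x)$ by the supremum over a countable dense subset $\{x_1,x_2,\ldots\}$, and approximate it from below by the finite maxima $M_n:=\max_{1\le i\le n}G(x_i)$, which increase to $\sup_{x\in\Gamma}G(x)$ almost surely. It therefore suffices to establish the bound for each $M_n$ with a variance proxy no larger than $\sigma_\Gamma^2$, and then pass to the limit.

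For fixed $n$, the vector $(G(x_1),\ldots,G(x_n))$ is centered Gaussian with some covariance $\Sigma$, so I can write it as $A\xi$ where $\xi\sim N(0,I_n)$ is standard and $AA^T=\Sigma$. Then $M_n=F(\xi)$ with $F(z)=\max_{1\le i\le n}(Az)_i$. The key structural observation is that $F$ is Lipschitz with constant $\sigma_\Gamma$ in the Euclidean norm: for any $z,z'$, if $i^*$ attains the maximum at $z$ then $F(z)-F(z')\le a_{i^*}\cdot(z-z')\le\|a_{i^*}\|\,\|z-z'\|$, where $a_i$ is the $i$th row of $A$, and by symmetry $|F(z)-F(z')|\le(\max_i\|a_i\|)\,\|z-z'\|$. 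Since $\|a_i\|^2=(AA^T)_{ii}=\Sigma_{ii}=\mathbb{E}G(x_i)^2\le\sigma_\Gamma^2$, the Lipschitz constant is at most $\sigma_\Gamma$, uniformly in $n$.

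The heart of the argument is then the Gaussian concentration inequality: if $F:\mathbb{R}^n\to\mathbb{R}$ is $L$-Lipschitz and $\xi\sim N(0,I_n)$, then $\mathbb{P}(F(\xi)-\mathbb{E}F(\xi)>u)\le e^{-u^2/(2L^2)}$ for all $u>0$, a bound whose constant does not depend on $n$. Applying this with $L=\sigma_\Gamma$ gives $\mathbb{P}(M_n-\mathbb{E}M_n>u)\le e^{-u^2/(2\sigma_\Gamma^2)}$ for every $n$. This dimension-free estimate is the one nontrivial input, and I would derive it from Gaussian isoperimetry, or equivalently from the Gaussian logarithmic Sobolev inequality via the Herbst argument; reproving that estimate is the main obstacle, while everything surrounding it is bookkeeping.

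Finally I would pass to the limit. Since $M_n\uparrow\sup_{x\in\Gamma}G(x)$ almost surely, monotone convergence gives convergence of the means provided $\mathbb{E}\sup_{x\in\Gamma}G(x)<\infty$, which here is guaranteed by the finiteness of the entropy integral through Lemma \ref{thm133}; in fact the uniform subgaussian tails just obtained already force $\sup_n\mathbb{E}M_n<\infty$, so the limit mean is finite. Letting $n\to\infty$ in the finite-$n$ bound and using continuity of $u\mapsto e^{-u^2/(2\sigma_\Gamma^2)}$ together with the almost sure convergence $M_n-\mathbb{E}M_n\to\sup_{x\in\Gamma}G(x)-\mathbb{E}\sup_{x\in\Gamma}G(x)$ recovers the stated inequality. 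The only delicate point in this last step is justifying convergence of the means, which is precisely where the entropy bound of Lemma \ref{thm133} is convenient.
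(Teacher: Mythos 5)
The paper does not actually prove Lemma \ref{thm211}: it is imported verbatim as Theorem 2.1.1 of \cite{adler2009random}, i.e., the Borell--TIS inequality. What you have written is, in outline, the standard proof of that cited theorem: use separability to reduce to finite maxima $M_n$, realize $(G(x_1),\ldots,G(x_n))$ as $A\xi$ with $\xi\sim N(0,I_n)$, note that $z\mapsto\max_i(Az)_i$ is Lipschitz with constant $\max_i\|a_i\|=\max_i\bigl(\mathbb{E}G(x_i)^2\bigr)^{1/2}\le\sigma_\Gamma$, apply the dimension-free Gaussian concentration inequality for Lipschitz functions (via isoperimetry or log-Sobolev plus the Herbst argument), and pass to the limit. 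This is sound, so your proposal effectively supplies a self-contained proof of a result the paper merely quotes; the final limiting step is cleanest if, instead of appealing to almost-sure convergence and continuity, you note that $\mathbb{E}M_n\le\mathbb{E}\sup_x G(x)$ gives $\mathbb{P}(M_n-\mathbb{E}\sup_x G(x)>u)\le e^{-u^2/(2\sigma_\Gamma^2)}$ and the events $\{M_n-\mathbb{E}\sup_x G(x)>u\}$ increase to $\{\sup_x G(x)-\mathbb{E}\sup_x G(x)>u\}$, or equivalently use the portmanteau bound for the open set $(u,\infty)$.

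One claim needs repair: the assertion that the upper-tail bounds just obtained ``already force $\sup_n\mathbb{E}M_n<\infty$'' is not correct as stated, since a one-sided bound on $M_n-\mathbb{E}M_n$ controls deviations above the mean but says nothing about the size of the mean itself. To justify $\mathbb{E}M_n\uparrow\mathbb{E}\sup_{x\in\Gamma}G(x)<\infty$ you need either (i) the matching lower-tail bound $\mathbb{P}(\mathbb{E}M_n-M_n>u)\le e^{-u^2/(2\sigma_\Gamma^2)}$, which follows by applying the same concentration inequality to $-F$ (Lipschitz with the same constant), combined with almost-sure finiteness of $\sup_x G(x)$ --- this pins $\mathbb{E}M_n\le K+u$ uniformly in $n$ for suitable $K,u$ --- or (ii) the entropy-integral bound of Lemma \ref{thm133}, which you also invoke and which is finite in the setting where the paper applies these lemmas (cf.\ the entropy bound established in the proof of Theorem \ref{ourthmjasa}). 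Note also that some finiteness hypothesis of this kind is genuinely needed: $\mathfrak{d}_g$-compactness alone does not guarantee a finite entropy integral or an almost-surely finite supremum, and the cited Theorem 2.1.1 of \cite{adler2009random} assumes $\sup_x G(x)<\infty$ almost surely. With either repair in place, your argument is complete and correct.
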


Theorem \ref{ourthmjasa} is a slightly strengthened version of Theorem 1 of \cite{wang2019prediction}. Its proof, in Section \ref{App:jasaprof}, is based on Lemmas \ref{thm133}-\ref{thm211} and some machinery from scattered data approximation \cite{wendland2004scattered}.

\begin{theorem}\label{ourthmjasa}
Suppose Condition \ref{C1} holds. Let $\mu(x)$ and $\sigma(x)$ be as in \eqref{mean} and \eqref{variance}, respectively, and $D_\Omega = \text{diam}(\Omega)$ be the Euclidean diameter of $\Omega$. Then for any $u>0$, and any closed deterministic subset $A\subset \Omega$, with probability at least $1-\exp\{-u^2/(2\sigma_A^2)\}$, the kriging prediction error has the upper bound
\begin{eqnarray}\label{eq:thmBound}
	\sup_{x\in A}Z(x)-\mu(x)\leq \eta_1 \sigma_A  \sqrt{p(1 \vee \log(A_0 D_\Omega))} \sqrt{\log(e\sigma/\sigma_A))} + u,
\end{eqnarray}
where $A_0$ is defined in Condition \ref{C1}, $\eta_1$ is a universal constant, and $\sigma_A = \sup_{x\in A}\sigma(x)$.
\end{theorem}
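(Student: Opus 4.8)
The plan is to treat the kriging residual $G(x):=Z(x)-\mu(x)$ as a separable zero-mean Gaussian process on $\Omega$ and to control $\sup_{x\in A}G(x)$ by pairing the entropy bound of Lemma \ref{thm133} with the concentration inequality of Lemma \ref{thm211}. First I would record that, since $(Z(x),Y)$ is jointly Gaussian and $\mu(x)=r^T(x)K^{-1}Y$ is linear in $Y$, the process $G$ is Gaussian with mean zero and $\EE G(x)^2=\EE[\mathrm{Var}(Z(x)\mid Y)]=\sigma^2(x)$; in particular $\sup_{x\in A}\EE G(x)^2=\sigma_A^2$, which is exactly the variance proxy $\sigma_\Gamma^2$ of Lemma \ref{thm211}. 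Hence any bound $\EE(\sup_{x\in A}G(x))\le B$ feeds into Lemma \ref{thm211} to give $\PP(\sup_{x\in A}G(x)>B+u)\le\exp\{-u^2/(2\sigma_A^2)\}$, which is precisely the tail statement of \eqref{eq:thmBound} once $B$ is identified with the leading term.

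The heart of the matter is thus to bound $\EE(\sup_{x\in A}G(x))$ through the entropy integral, and the first reduction is to dominate the canonical metric of $G$ by that of $Z$. Because $\mu(x)$ is the $L^2$-projection of $Z(x)$ onto the span of the observations, $G(x)$ is the projection onto its orthogonal complement, and projections are $L^2$-contractions; therefore $\mathfrak{d}_g(G(x_1),G(x_2))^2=\EE(G(x_1)-G(x_2))^2\le\EE(Z(x_1)-Z(x_2))^2=2\sigma^2(1-\Psi(x_1-x_2))$. Applying the inversion formula $\Psi(v)=\int\cos(\omega^Tv)\tilde\Psi(\omega)\,d\omega$, the elementary inequality $1-\cos\theta\le|\theta|$, the estimate $|\omega^T(x_1-x_2)|\le\|\omega\|_1\|x_1-x_2\|$, and Condition \ref{C1}, I obtain $\mathfrak{d}_g(G(x_1),G(x_2))^2\le2\sigma^2A_0\|x_1-x_2\|$. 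This linear-in-distance bound converts a Euclidean cover of $A$ (whose Euclidean diameter is at most $D_\Omega$) into a $\mathfrak{d}_g$-cover: a $\mathfrak{d}_g$-ball of radius $\epsilon$ contains a Euclidean ball of radius of order $\epsilon^2/(\sigma^2A_0)$, so the standard volumetric estimate yields $\log N(\epsilon,A,\mathfrak{d}_g)\le p\log(c\sigma^2A_0D_\Omega/\epsilon^2)$ for a numerical constant $c$, with the understanding that this is replaced by its positive part where it would be negative.

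I expect the main obstacle to be the evaluation of this entropy integral and its recasting into the \emph{product} form of \eqref{eq:thmBound}. Since the $\mathfrak{d}_g$-diameter $D$ of $A$ satisfies $D\le2\sigma_A$, substituting $\epsilon=\sigma_As$ gives
\[
\int_0^{D/2}\sqrt{\log N(\epsilon,A,\mathfrak{d}_g)}\,d\epsilon\le\sqrt{p}\,\sigma_A\int_0^1\sqrt{\max(a+2\log(1/s),0)}\,ds,\qquad a=\log(cA_0D_\Omega)+2\log(\sigma/\sigma_A).
\]
Using $\sqrt{x+y}\le\sqrt{x}+\sqrt{y}$ and $\int_0^1\sqrt{\log(1/s)}\,ds=\sqrt{\pi}/2$ (and noting that when $a<0$ the integrand is supported on a shrinking interval, so the integral stays below a constant), the right-hand side is at most $\sqrt{p}\,\sigma_A(\sqrt{\max(a,0)}+\sqrt{\pi/2})$. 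The delicate point is to dominate $\sqrt{\max(a,0)}$ by a product rather than a sum of logarithms: writing $u_0=1\vee\log(A_0D_\Omega)\ge1$ and $v_0=\log(e\sigma/\sigma_A)\ge1$ (the latter from $\sigma_A\le\sigma$), the bounds $\log(A_0D_\Omega)\le u_0$ and $\log(\sigma/\sigma_A)=v_0-1$ yield $a\le\log c+u_0+2v_0$, which is a constant multiple of $u_0v_0$ because $u_0,v_0\ge1$; hence $\sqrt{\max(a,0)}\lesssim\sqrt{u_0}\,\sqrt{v_0}$, and the additive $\sqrt{\pi/2}$ together with the universal constant $\eta$ of Lemma \ref{thm133} are absorbed since $\sqrt{u_0v_0}\ge1$.

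Combining these pieces gives $\EE(\sup_{x\in A}G(x))\le\eta_1\sigma_A\sqrt{p(1\vee\log(A_0D_\Omega))}\sqrt{\log(e\sigma/\sigma_A)}$, and feeding this into Lemma \ref{thm211} with variance proxy $\sigma_A^2$ produces \eqref{eq:thmBound}. Finally I would dispose of the degenerate cases: when $\sigma_A=0$ the convention $0/0=0$ makes the claim trivial, and the roles of the $1\vee(\cdot)$ guard and the factor $e$ inside the logarithm are exactly to keep the bound valid and the integrals well defined when $A_0D_\Omega$ is small or $\sigma_A$ is close to $\sigma$.
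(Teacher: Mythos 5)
Your proof is correct, and it reaches the paper's conclusion by a genuinely different route at the one step that matters. The paper also runs the Dudley-plus-Borell--TIS scheme (its Lemmas 1 and 2), bounds the $\mathfrak{d}$-diameter by $2\sigma_A$, uses the volumetric Euclidean covering bound, and performs the same rescaling $\epsilon=\sigma_A s$ with the same trick of absorbing additive logarithms into the product $(1\vee\log(A_0D_\Omega))\log(e\sigma/\sigma_A)$ using that both factors are at least $1$. Where you diverge is in dominating the canonical metric of the residual process: the paper writes $\mathfrak{d}(x,x')^2$ as the kriging error of the function $h(\cdot)=\Psi(x-\cdot)-\Psi(x'-\cdot)$ and invokes the native-space (RKHS) machinery of scattered data approximation (Theorem 11.4 of Wendland) to get $\mathfrak{d}(x,x')^2\leq 2\sigma\sigma_A\|h\|_{\mathcal{N}_\Psi}\lesssim \sigma\sigma_A\sqrt{A_0\|x-x'\|}$, so that $\sigma_A$ enters through the metric itself (H\"older exponent $1/4$, covering bound with $\epsilon^{-4}$); you instead observe that $Z(x)-\mu(x)=(I-P)Z(x)$ for the $L^2$-orthogonal projection $P$ onto $\mathrm{span}\{Z(x_1),\dots,Z(x_n)\}$, so that contraction gives $\mathfrak{d}_g(x,x')^2\leq \mathbb{E}(Z(x)-Z(x'))^2=2\sigma^2(1-\Psi(x-x'))\leq 2\sigma^2A_0\|x-x'\|$ (H\"older exponent $1/2$, covering bound with $\epsilon^{-2}$), with $\sigma_A$ entering only through the integration range $D/2\leq\sigma_A$. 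After the substitution $\epsilon=\sigma_A s$ both covering bounds have the same structure $\log(A_0D_\Omega)+2\log(\sigma/\sigma_A)+O(\log(1/s))$, so the final bounds agree up to the universal constant, which the paper calibrates numerically anyway. Your argument is more elementary — it needs no RKHS interpolation theory at all, only Hilbert-space projection and the spectral representation — while the paper's native-space bound is essentially inherited from the proof of Theorem 1 of Wang et al. that it strengthens; for this particular statement it confers no extra sharpness. Two minor points to tighten in a write-up: justify separability of the residual process on $A$ (e.g., via mean-square continuity, which your metric bound provides), and replace the informal "shrinking interval" remark for the case $a<0$ by the one-line estimate $\max(a+2\log(1/s),0)\leq 2\log(1/s)$, whose integral over $[0,1]$ is the constant $\sqrt{\pi/2}$ you already absorb.
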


\section{PROOF OF THEOREM \ref{thm:undersmooth}}\label{sec:pfthm1}

We proof Theorem \ref{thm:undersmooth} by partitioning $\Omega$ into subregions, and applying Theorem \ref{ourthmjasa} on each of them.
Let $\Omega_i = \{x\in \Omega | \sigma e^{-i} \leqslant \sigma(x) \leqslant  \sigma e^{-i+1}\},$ for $i=1,\ldots$. Let $\sigma_i = \sup_{x\in \Omega_i} \sigma(x)$.

Take $\eta_2 = \eta_1 \sqrt{2}e$. By Theorem \ref{ourthmjasa}, we have
\begin{align*}
    & P\left(\sup_{x\in \Omega}\frac{Z(x) - \mu(x)}{\sigma(x)  \log^{1/2}(e\sigma/\sigma(x) )} >  \eta_2 \sqrt{p(1 \vee \log(A_0 D_\Omega))} + u \right) \nonumber\\
    \leqslant & \sum_{i=1}^\infty P\left(\sup_{x\in \Omega_i}\frac{Z(x) - \mu(x)}{\sigma(x)\log^{1/2}(e\sigma/\sigma(x) )}  >  \eta_2 \sqrt{p(1 \vee \log(A_0 D_\Omega))} + u \right)\nonumber\\
    \leqslant & \sum_{i=1}^\infty P\left(\sup_{x\in \Omega_i}Z(x) - \mu(x)  > ( \eta_2 \sqrt{p(1 \vee \log(A_0 D_\Omega))} + u)\sigma e^{-i}\sqrt{i} \right)\nonumber\\
    \leqslant & \sum_{i=1}^\infty P\left(\sup_{x\in \Omega_i}Z(x) - \mu(x)  > ( \eta_2 \sqrt{p(1 \vee \log(A_0 D_\Omega))} + u) \sigma_{i}\log^{1/2} (e\sigma/\sigma_{i})/(\sqrt{2}e) \right)\nonumber\\
    \leqslant & \sum_{i=1}^\infty \exp\left\{ - u^2\log (e\sigma/\sigma_{i})/(4 e^2)  \right\}\nonumber\\
    \leqslant & \sum_{i=1}^\infty \exp\left\{ - i u^2/(4 e^2)  \right\} = \frac{\exp\left\{ - u^2/(4 e^2)  \right\}}{1 - \exp\left\{ - u^2/(4 e^2)  \right\}},\nonumber
\end{align*}
which, together with the fact that $M\geq 0$, implies the following upper bound of $\mathbb{E}M$
\begin{align*}
    \mathbb{E} M & = \int_0^\infty \mathbb{P}(M > x) dx\nonumber\\
    & \leq \bigg(\int_0^{  \eta_2 \sqrt{p(1 \vee \log(A_0 D_\Omega))} + 1} + \int_{  \eta_2 \sqrt{p(1 \vee \log(A_0 D_\Omega))} + 1}^\infty\bigg)\mathbb{P}(M > x) dx\nonumber\\
    & \leq  \eta_2 \sqrt{p(1 \vee \log(A_0 D_\Omega))} + 1 + \int_1^\infty \frac{2\exp\left\{ - x^2/(4 e^2)  \right\}}{1 - \exp\left\{ - x^2/(4 e^2)  \right\}} dx\nonumber\\
    & \leq C_0 \sqrt{p(1 \vee \log(A_0 D_\Omega))}.
\end{align*}
To access the tail probability, we note that $M-\mathbb{E}M$ is also a Gaussian process with mean zero. Thus
by Lemma \ref{thm211}, we have
\begin{align*}
    \mathbb{P}(M-\mathbb{E}M>t)\leq  e^{-t^2/2\sigma^2_M},
\end{align*}
where $$\sigma^2_M = \sup_{x\in \Omega}\mathbb{E}\frac{(Z(x) - \mu(x))^2}{\sigma(x)^2\log (e\sigma/\sigma(x))} \leq 1.$$
Hence, we complete the proof.

\section{INDEPENDENCE IN SEQUENTIAL GAUSSIAN PROCESS MODELING}

The proof of Theorem \ref{thm:sequential} relies on certain independence properties of sequential Gaussian process modeling shown in Lemmas \ref{lemma:twosets}-\ref{thm:indenpendence}. First we introduce some notation.
For an arbitrary function $f$, and $X=(x_1,\ldots,x_n)$, define $f(X)=(f(x_1),\ldots,f(x_n))^T$, and
\begin{eqnarray}\label{interp}
\mathcal{I}_{\Psi,X}f( x)=r^T(x) K^{-1}  f(X),
\end{eqnarray}
where $r=(\Psi(x-x_1),\ldots,\Psi(x-x_n))^T, K=(\Psi(x_j-x_k))_{j k}$. For notational convenience, we define $\mathcal{I}_{\Psi,\varnothing}f=0$. 

\begin{lemma}\label{lemma:twosets}
Let $Z$ be a stationary Gaussian process with mean zero and correlation function $\Psi$. For two sets of scattered points $X'\subset X=(x_1,\ldots,x_n)$, we have
\begin{eqnarray}\label{decomposition}
Z-\mathcal{I}_{\Psi,X'}Z=(Z-\mathcal{I}_{\Psi,X}Z)+\mathcal{I}_{\Psi,X}(Z-\mathcal{I}_{\Psi,X'}Z).
\end{eqnarray}
In addition, if $X$ and $X'$ are deterministic sets, then the residual $Z-\mathcal{I}_{\Psi,X}Z$ and the vector of observed data $(Z(x_1),\ldots,Z(x_n))^T$ are mutually independent Gaussian process and vector, respectively.
\end{lemma}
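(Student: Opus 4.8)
The plan is to treat the two assertions separately, handling the algebraic decomposition first and the probabilistic independence second. For the identity \eqref{decomposition}, I would first note that it is purely deterministic: it holds verbatim with $Z$ replaced by any fixed function, because $\mathcal{I}_{\Psi,X}$ is a linear operator. Expanding the right-hand side and using linearity gives
\begin{align*}
(Z-\mathcal{I}_{\Psi,X}Z)+\mathcal{I}_{\Psi,X}(Z-\mathcal{I}_{\Psi,X'}Z)
= Z - \mathcal{I}_{\Psi,X}\mathcal{I}_{\Psi,X'}Z,
\end{align*}
so the claim reduces to the single reproducing identity $\mathcal{I}_{\Psi,X}\mathcal{I}_{\Psi,X'}Z=\mathcal{I}_{\Psi,X'}Z$. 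The key observation is that $\mathcal{I}_{\Psi,X}$ fixes every function lying in $\mathrm{span}\{\Psi(\cdot-x_i):x_i\in X\}$: if $g=\sum_i a_i\Psi(\cdot-x_i)$, then $g(X)=Ka$, whence $\mathcal{I}_{\Psi,X}g=r^T(\cdot)K^{-1}Ka=r^T(\cdot)a=g$. Since $X'\subset X$, the interpolant $\mathcal{I}_{\Psi,X'}Z=\sum_{x_j\in X'}c_j\Psi(\cdot-x_j)$ belongs to that span, and the reproducing identity follows.

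For the second part, the first step is to observe that the residual $R:=Z-\mathcal{I}_{\Psi,X}Z$ and the vector $Z(X)=(Z(x_1),\ldots,Z(x_n))^T$ are jointly Gaussian whenever $X$ is deterministic, because $R(x)=Z(x)-r^T(x)K^{-1}Z(X)$ is a fixed linear functional of the Gaussian field $Z$; hence any finite collection $(R(x^{(1)}),\ldots,R(x^{(m)}),Z(x_1),\ldots,Z(x_n))$ is a linear image of a Gaussian vector. For jointly Gaussian variables independence is equivalent to a vanishing cross-covariance, so it suffices to show $\text{Cov}(R(x),Z(x_k))=0$ for every $x\in\Omega$ and every $x_k\in X$. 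Using $\text{Cov}(Z(X),Z(x_k))=\sigma^2 K e_k$ (the $k$-th column of $\sigma^2 K$, since its $j$-th entry is $\sigma^2\Psi(x_j-x_k)=\sigma^2 K_{jk}$), the computation is the cancellation
\begin{align*}
\text{Cov}(R(x),Z(x_k)) = \sigma^2\Psi(x-x_k) - r^T(x)K^{-1}\sigma^2 Ke_k = \sigma^2\Psi(x-x_k)-\sigma^2 r_k(x) = 0,
\end{align*}
because $r_k(x)=\Psi(x-x_k)$. Thus the $R$-block and the $Z(X)$-block of the jointly Gaussian vector are uncorrelated, hence independent.

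The last step, and the only genuinely delicate one, is to upgrade this finite-dimensional independence to independence of the \emph{entire} residual process $R$ from the vector $Z(X)$. I would argue that the zero-covariance computation shows every finite-dimensional marginal of $R$ is independent of $Z(X)$, and then invoke a standard $\pi$-system (monotone class) argument: cylinder sets form a $\pi$-system generating the path-space $\sigma$-algebra of $R$, and factorization of probabilities on this generating class extends to the full product $\sigma$-algebra. I expect the reproducing identity and the covariance cancellation to be essentially mechanical; the main point requiring care is precisely this passage from finite-dimensional to process-level independence, which relies on separability of $Z$ to guarantee that the relevant path functionals are measurable.
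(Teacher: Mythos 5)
Your proposal is correct and takes essentially the same route as the paper's proof: linearity of the interpolation operators plus the reproducing identity for the decomposition \eqref{decomposition}, and vanishing cross-covariance of the jointly Gaussian residual and data vector for the independence claim. Two minor points in your favor: the identity you isolate, $\mathcal{I}_{\Psi,X}\mathcal{I}_{\Psi,X'}=\mathcal{I}_{\Psi,X'}$, is the one actually needed (the paper writes the composition in the opposite order, which reads as a typo), and your explicit $\pi$-system step upgrading finite-dimensional independence to independence of the entire residual process from $Z(X)$ is left implicit in the paper's one-line covariance calculation.
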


\begin{proof}
It is easily seen that $\mathcal{I}_{\Psi,X}$ and $\mathcal{I}_{\Psi,X'}$ are linear operators and $\mathcal{I}_{\Psi,X'}\mathcal{I}_{\Psi,X}=\mathcal{I}_{\Psi,X}$, which implies \eqref{decomposition}.

The residual $Z-\mathcal{I}_{\Psi,X}Z$ is a Gaussian process because $\mathcal{I}_{\Psi,X}$ is linear. The independence between the Gaussian process and the vector can be proven by calculation the covariance
\begin{eqnarray*}
&&\text{Cov}(Z(x')-\mathcal{I}_{\Psi,X'}Z(x'),Z(X))\\
&=&\text{Cov}(Z(x')-r^T(x') K^{-1}  Z(X),Z(X))\\
&=& r(x')-r(x')=0,
\end{eqnarray*}
which completes the proof.
\end{proof}

\begin{lemma}\label{thm:indenpendence}
    For any instance algorithm of Bayesian optimization, the following statements are true.
    \begin{enumerate}
        \item Conditional on $\mathcal{F}_{n-1}$ and $X_{n}$, the residual process $Z(\cdot)-\mu_n(\cdot)$ is independent of $\mathcal{F}_n$.
        \item Conditional on $\mathcal{F}_{n}$, the residual process $Z(\cdot)-\mu_n(\cdot)$ is a Gaussian process with same law as $Z'(\cdot)-\mathcal{I}_{\Psi,X_{1:n}}Z'(\cdot)$, where $Z'$ is an independent copy of $Z$.
    \end{enumerate}
\end{lemma}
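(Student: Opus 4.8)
The plan is to prove both parts together by induction on $n$, exploiting the fact that the statement of Part 2 at stage $n-1$ is exactly what one needs to pin down the conditional law of $Z$ at stage $n$. First observe that, since $f=Z$ and $Y_{1:n}=Z(X_{1:n})$, the predictor in (\ref{seqmean}) is $\mu_n(\cdot)=\mathcal{I}_{\Psi,X_{1:n}}Z(\cdot)$, so the residual process is $R_n:=Z-\mathcal{I}_{\Psi,X_{1:n}}Z$. The sole obstruction to invoking the fixed-design Lemma \ref{lemma:twosets} is that $X_{1:n}$ is random and data-dependent; the induction is designed to remove this. The base case is immediate, because $\mathcal{I}_{\Psi,\varnothing}=0$ makes $R_0=Z$, which has the law of $Z'-\mathcal{I}_{\Psi,\varnothing}Z'$.

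For the inductive step I would condition on $\mathcal{F}_{n-1}$ together with $X_n$, which turns $X_{1:n}$ into a deterministic set $\chi$ with $\chi'=X_{1:n-1}\subset\chi$. By ABO Condition 1, $X_n$ and $Z$ are independent given $\mathcal{F}_{n-1}$, so $\mathrm{Law}(Z\mid\mathcal{F}_{n-1},X_n)=\mathrm{Law}(Z\mid\mathcal{F}_{n-1})$; and by the inductive hypothesis (Part 2 at $n-1$) this law is that of $\mathcal{I}_{\Psi,\chi'}Z$, a function fixed by the conditioning, plus an independent copy of the residual process $Z-\mathcal{I}_{\Psi,\chi'}Z$. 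Writing $Z=\mathcal{I}_{\Psi,\chi'}Z+R_{n-1}$ and using the reproducing identity $\mathcal{I}_{\Psi,\chi}\mathcal{I}_{\Psi,\chi'}=\mathcal{I}_{\Psi,\chi'}$ (the companion of the one in the proof of Lemma \ref{lemma:twosets}, valid because $\mathcal{I}_{\Psi,\chi'}Z$ already lies in the span reproduced by $\mathcal{I}_{\Psi,\chi}$), the operator $I-\mathcal{I}_{\Psi,\chi}$ annihilates the interpolant term and leaves $R_n=(I-\mathcal{I}_{\Psi,\chi})R_{n-1}$.

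To establish Part 1 I would then replace $R_{n-1}$, in distribution, by $Z'-\mathcal{I}_{\Psi,\chi'}Z'$ for an independent copy $Z'$ of $Z$ carrying the original mean-zero stationary law. The same identity gives $(I-\mathcal{I}_{\Psi,\chi})(Z'-\mathcal{I}_{\Psi,\chi'}Z')=Z'-\mathcal{I}_{\Psi,\chi}Z'$, while $R_{n-1}(\chi)$ corresponds to a fixed linear functional of $Z'(\chi)$; hence the pair $\big((I-\mathcal{I}_{\Psi,\chi})R_{n-1},\,R_{n-1}(\chi)\big)$ has the same joint law as $\big(Z'-\mathcal{I}_{\Psi,\chi}Z',\,\text{a function of }Z'(\chi)\big)$. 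Lemma \ref{lemma:twosets}, applied to the genuinely stationary $Z'$ on the deterministic pair $\chi'\subset\chi$, shows that $Z'-\mathcal{I}_{\Psi,\chi}Z'$ is independent of $Z'(\chi)$, so $R_n$ is independent of $R_{n-1}(\chi)$ under the conditional law. Since the only fresh information in $\mathcal{F}_n$ beyond $(\mathcal{F}_{n-1},X_n)$ is $Y_n=Z(X_n)$, itself a function of $R_{n-1}(\chi)$, this yields Part 1. Part 2 then drops out of the same representation: the conditional law of $R_n=(I-\mathcal{I}_{\Psi,\chi})R_{n-1}$ equals that of $Z'-\mathcal{I}_{\Psi,\chi}Z'$ and does not involve $Y_n$, so the independence just proved means that conditioning further on all of $\mathcal{F}_n$ does not alter it, closing the induction.

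The hard part is not any single computation but the careful handling of the random, adaptive design: I expect the main work to be justifying, at a measure-theoretic level, that conditioning on $\mathcal{F}_{n-1}$ and $X_n$ really does reduce everything to a fixed design to which Lemma \ref{lemma:twosets} applies. Bundling the two parts into one induction—so that Part 2 at stage $n-1$ supplies $\mathrm{Law}(Z\mid\mathcal{F}_{n-1})$ and ABO Condition 1 eliminates the dependence on $X_n$—is precisely what discharges this obstacle, after which the reproducing identities and the passage to the independent copy $Z'$ are routine.
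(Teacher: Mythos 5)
Your proposal is correct and follows essentially the same route as the paper's own proof: a joint induction on $n$ that uses Lemma \ref{lemma:twosets} for the fixed-design decomposition and independence, ABO Condition 1 to freeze the design conditional on $(\mathcal{F}_{n-1},X_n)$, and the transfer to an independent copy $Z'$ to conclude that the residual is conditionally independent of the new observations (Part 1), with Part 2 following from Part 1 plus the equality in law. Incidentally, your operator identity $\mathcal{I}_{\Psi,\chi}\mathcal{I}_{\Psi,\chi'}=\mathcal{I}_{\Psi,\chi'}$ is the correct form of the identity that the paper's proof of Lemma \ref{lemma:twosets} states with the indices transposed, apparently a typo.
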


\begin{proof}
We use induction on $n$. For $n=1$, the desired results are direct consequences of Lemma \ref{lemma:twosets}, because the design set is suppressed conditional on $\mathcal{F}_0$.

Now suppose that we have proven already the assertion for $n$ and want to conclude it for $n+1$. First, we invoke the decomposition given by Lemma \ref{lemma:twosets} to have
\begin{eqnarray}\label{Zprime}
Z'-\mathcal{I}_{\Psi,X_{1:n}}Z'=(Z'-\mathcal{I}_{\Psi,X_{1:(n+1)}}Z')+\mathcal{I}_{\Psi,X_{1:(n+1)}}(Z'-\mathcal{I}_{\Psi,X_{1:n}}Z').
\end{eqnarray}
Because $\mu_n=\mathcal{I}_{\Psi,X_{1:n}}Z$, we also have
\begin{eqnarray}\label{Zmu}
Z-\mu_{n}=(Z-\mu_{n+1})+\mathcal{I}_{\Psi,X_{1:(n+1)}}(Z-\mu_n).
\end{eqnarray}
By the inductive hypothesis, $Z-\mu_n$ has the same law as $Z'-\mathcal{I}_{\Psi,X_{1:n}}Z'$ conditional on $\mathcal{F}_{n}$, denoted by $Z-\mu_n\eqd Z'-\mathcal{I}_{\Psi,X_{1:n}}Z'|\mathcal{F}_n$. 
Our assumption that $X_{n+1}$ is independent of $(Z,Z')$ conditional on $\mathcal{F}_n$ implies that $X_{n+1}$ is independent of $(Z-\mu_n,Z'-\mathcal{I}_{\Psi,X_{1:n}}Z')$ as well. Thus,
$$Z-\mu_n\eqd Z'-\mathcal{I}_{\Psi,X_{1:n}}Z'|\mathcal{F}_n,X_{n+1}. $$
Clearly, this equality in distribution is preserved by acting $\mathcal{I}_{\Psi,X_{1:(n+1)}}$ on both sides, which implies  $$\left(Z-\mu_n,\mathcal{I}_{\Psi,X_{1:(n+1)}}(Z-\mu_n)\right)\eqd \left(Z'-\mathcal{I}_{\Psi,X_{1:n}}Z',\mathcal{I}_{\Psi,X_{1:(n+1)}}(Z'-\mathcal{I}_{\Psi,X_{1:n}}Z')\right)|\mathcal{F}_n,X_{n+1}.$$ Incorporating the above equation with \eqref{Zprime} and \eqref{Zmu} yields 
\begin{eqnarray}\label{eqindist}
\left(Z-\mu_{n+1},Z-\mu_n)\right)\eqd \left(Z'-\mathcal{I}_{\Psi,X_{1:(n+1)}}Z',Z'-\mathcal{I}_{\Psi,X_{1:n}}Z'\right)|\mathcal{F}_n,X_{n+1}.
\end{eqnarray}
Now we consider the vectors $V:=Z(X_{n+1})-\mu_n(X_{n+1})$ and $V'=Z'(X_{n+1})-\mathcal{I}_{\Psi,X_{1:n}}Z'(X_{n+1})$. Then \eqref{eqindist} implies
\begin{eqnarray}\label{eqindist2}
\left(Z-\mu_{n+1},V)\right)\eqd \left(Z'-\mathcal{I}_{\Psi,X_{1:(n+1)}}Z',V'\right)|\mathcal{F}_n,X_{n+1}.
\end{eqnarray}
Because $V'$ consists of observed data, we can apply
Lemma \ref{lemma:twosets} to obtain that, conditional on $\mathcal{F}_n$ and $X_{n+1}$, $Z'-\mathcal{I}_{\Psi,X_{1:(n+1)}}Z'$ is independent of $V'$, which, together with \eqref{eqindist2}, implies that $Z-\mu_{n+1}$ and $V$ are independent conditional on $\mathcal{F}_n$ and $X_{n+1}$. Because $\mu_n(X_{n+1})$ is measurable with respect to the $\sigma$-algebra generated by $\mathcal{F}_n$ and $X_{n+1}$, we obtain that $Z-\mu_{n+1}$ is independent of $Z(X_{n+1})$ conditional on $\mathcal{F}_n$ and $X_{n+1}$, which proves Statement 1. Combining Statement 1 and \eqref{eqindist} yields Statement 2.
\end{proof}

\section{PROOF OF THEOREM \ref{thm:sequential}}\label{sec:pfthm2}

The law of total probability implies
\begin{eqnarray*}
&&\mathbb{P}(M_T-C \sqrt{p(1 \vee \log(A_0 D_\Omega))}>t)\\
&=&\sum_{i=n}^{\infty} \mathbb{P}(M_T-C \sqrt{p(1 \vee \log(A_0 D_\Omega))}>t|T=n)\mathbb{P}(T=n)\\&=&\sum_{n=1}^{\infty} \mathbb{P}(M_n-C \sqrt{p(1 \vee \log(A_0 D_\Omega))}>t|T=n)\mathbb{P}(T=n)\\
&=&\sum_{n=1}^{\infty} \mathbb{E}\left\{\mathbb{P}(M_n-C \sqrt{p(1 \vee \log(A_0 D_\Omega))}>t|\mathcal{F}_n)\Big|T=n\right\}\mathbb{P}(T=n),
\end{eqnarray*}
where the last equality follows from the fact that $\{T=n\}\in\mathcal{F}_n$, namely, $T$ is a stopping time. Clearly, the desired results are proven if we can show $\mathbb{P}(M_n-C \sqrt{p(1 \vee \log(A_0 D_\Omega))}>t|\mathcal{F}_n)<e^{-t^2/2}$. Now we resort to part 2 of Lemma \ref{thm:indenpendence}, which states that conditional on $\mathcal{F}_n$, $Z(\cdot)-\mu_n(\cdot)$ is identical in law to its independent copy $Z'(\cdot)-\mathcal{I}_{\Psi,X_{1:n}}Z'(\cdot)$. Although the event $\{M_n-C \sqrt{p(1 \vee \log(A_0 D_\Omega))}>t\}$ looks complicated, it is measurable with respect to $Z(\cdot)-\mu_n(\cdot)$. Thus, we arrive at
\begin{eqnarray}
&&\mathbb{P}(M_n-C \sqrt{p(1 \vee \log(A_0 D_\Omega))}>t|\mathcal{F}_n)\nonumber\\
&=&\mathbb{P}\left(\sup_{x\in \Omega}\frac{Z'(x) - \mathcal{I}_{\Phi,X_{1:n}}Z'(x)}{\sigma_n(x)\log^{1/2} (e \sigma/\sigma_n(x))}-C \sqrt{p(1 \vee \log(A_0 D_\Omega))}>t|\mathcal{F}_n\right).\label{condition}
\end{eqnarray}
Because $Z'$ is independent of $Z$, the part of conditioning with respect to $Z(X_{1:n})$ in \eqref{condition} has no effect on $Z'$. The only thing that matters is the effect of the conditioning on the design points $X_{1:n}$. Hence, \eqref{condition} is reduced to 
\begin{eqnarray}
\mathbb{P}\left(\sup_{x\in \Omega}\frac{Z'(x) - \mathcal{I}_{\Phi,X_{1:n}}Z'(x)}{\sigma_n(x)\log^{1/2} (e \sigma/\sigma_n(x))}-C \sqrt{p(1 \vee \log(A_0 D_\Omega))}>t|X_{1:n}\right).\label{probfixed}
\end{eqnarray}
Clearly, we can regard the points $X_{1:n}$ in the formula above as a fixed design. Then the probability \eqref{probfixed} is bounded above by $e^{-t^2/2}$ as asserted by Corollary \ref{coro:practice}.

\section{PROOF OF THEOREM \ref{ourthmjasa}}\label{App:jasaprof}

This proof is similar to Theorem 1 of \cite{wang2019prediction} but with a few technical improvements.

Because $\mu(x)$ is a linear combination of $Z(x_i)$'s, $\mu(x)$ is also a Gaussian process. The main idea of the proof is to invoke a maximum inequality for Gaussian processes, which states that the supremum of a Gaussian process is no more than a multiple of the integral of the covering number with respect to its natural distance $\mathfrak{d}$. See \cite{adler2009random,van1996weak} for related discussions.

Let $g(x) = Z(x)-\mu(x)$. For any $x,x'\in A$, because $A$ is deterministic, we have
\begin{align*}
\mathfrak{d}(x, x')^2 = &\mathbb{E}(g(x)-g(x'))^2\\
                      = & \mathbb{E}(Z(x)-\mu(x)-(Z(x')-\mu(x')))^2 \\
                      = &\sigma^2(\Psi(x-x) - r^T(x)K^{-1}r(x)  + \Psi(x'-x') - r^T(x')K^{-1}r(x')\\
          & -2[\Psi(x-x')- r^T(x')K^{-1}r(x) ]),
\end{align*}
where $r(\cdot) = (\Psi(\cdot-x_1),\ldots,\Psi(\cdot-x_n))^T$, $K = (\Psi(x_j-x_k))_{jk}$.

The rest of our proof consists of the following steps. In step 1, we bound the covering number $N(\epsilon,A,\mathfrak{d})$. Next we bound the diameter $D$. In step 3, we obtain a bound for the entropy integral. In the last step, we invoke Lemmas \ref{thm133} and \ref{thm211} to obtain the desired results.

\newpage

\noindent\textbf{Step 1: Bounding the covering number}

Let $h(\cdot) = \Psi(x-\cdot) - \Psi(x'-\cdot)$. It can verified that
\begin{align*}
\begin{split}
\mathfrak{d}(x, x')^2  = & - \sigma^2[h( x') - \mathcal{I}_{\Psi,X}h(x')]+\sigma^2[h(x) - \mathcal{I}_{\Psi,X}h( x)].
\end{split}
\end{align*}
By Theorem 11.4 of \cite{wendland2004scattered},
\begin{align}\label{eq:dist2}
\mathfrak{d}(x, x')^2 \leq 2 \sigma^2 (\sigma_A/\sigma\|h\|_{\mathcal{N}_\Psi(\mathbf{R}^d)}) = 2 \sigma \sigma_A\|h\|_{\mathcal{N}_\Psi(\mathbf{R}^d)},
\end{align}
where
\begin{align*}
    \sigma_A^2 = \sup_{x\in A}\sigma(x)^2 = \sigma^2\sup _{x\in A} (\Psi(x-x) - r^T(x)K^{-1}r(x)).
\end{align*}
Denote the Euclidean norm by $\|\cdot\|$.
Then, by the definition of the spectral density and the mean value theorem, we have
\begin{align}
\|h\|^2_{\mathcal{N}_\Psi(\mathbf{R}^d)} & = \Psi(x-x) - 2\Psi(x'-x) + \Psi(x'-x')\nonumber\\
& = 2\int_{\mathbf{R}^d}(1-\cos({(x-x')^T\omega}))\tilde \Psi(\bm\omega)d\bm\omega\nonumber\\
& \leq \bigg( 2\int_{\mathbf{R}^d}\|\omega\|\tilde \Psi(\bm\omega)d\bm\omega\bigg)\|x-x'\|\nonumber\\
&\leq 2A_0\|x-x'\|,\label{eq:entropyC1}
\end{align}
where the last inequality follows from the fact that $\|\omega\|\leq \|\omega\|_1$.
Combining \eqref{eq:dist2} and \eqref{eq:entropyC1} yields
\begin{align}\label{eq:dist}
\mathfrak{d}(x, x')^2 \leq 2A_0^{1/2} \sigma\sigma_A\|x-x'\|^{1/2}.
\end{align}
Therefore, the covering number is bounded above by
\begin{align}\label{e1}
\log N(\epsilon,A,\mathfrak{d}) \leq \log N\bigg(\frac{\epsilon^{4}}{4A_0 \sigma^2\sigma_A^2},A,\|\cdot\|\bigg).
\end{align}
The right side of \eqref{e1} involves the covering number of a Euclidean ball, which is well understood in the literature.
See Lemma 4.1 of \cite{pollard1990empirical}. This result leads to the bound
\begin{align}\label{eq:entropyBoundC1}
\log N(\epsilon,A,\mathfrak{d})\leq p\log\bigg(\frac{48 A_0 D_A \sigma^2\sigma_A^2}{\epsilon^4} + 1\bigg) \leq p\log\bigg(\frac{48A_0 D_\Omega \sigma^2\sigma_A^2}{\epsilon^4} + 1\bigg),
\end{align}
where $D_A = \text{diam}(A)$ and $D_\Omega = \text{diam}(\Omega)$ are the Euclidean diameter of $A$ and $\Omega$, respectively.

\noindent\textbf{Step 2: Bounding the diameter $D$}

Define the diameter under metric $\mathfrak{d}$ by $D = \sup_{x, x'\in A}\mathfrak{d}(x, x')$. For any $x,x'\in A$,
\begin{align}\label{eq:distD1}
\mathfrak{d}(x, x')^2 = &\mathbb{E}(g(x)-g(x'))^2 \leq  4\sup _{x\in A}\mathbb{E}(g(x))^2\nonumber\\
                      = & 4\sup _{x\in A}\mathbb{E}(Z(x)-\mathcal{I}_{\Psi,\mathbf{X}}Z(x))^2\nonumber\\
                      = & 4\sigma^2\sup _{x\in A} (\Psi(x-x) - r^T(x)K^{-1}r(x)) = 4 \sigma_A^2.
\end{align}
Thus we conclude that
\begin{align}\label{eq:diamC1}
D\leq 2 \sigma_A.
\end{align}

\newpage

\noindent\textbf{Step 3: Bounding the entropy integral}

By \eqref{eq:entropyBoundC1} and \eqref{eq:diamC1},
\begin{align}\label{eq:boundEC1}
\int_0^{D/2} \sqrt{\log N(\epsilon,A,\mathfrak{d})}d\epsilon & \leq \int_0^{\sigma_A} \sqrt{ p\log\bigg(\frac{48A_0 D_\Omega \sigma^2\sigma_A^{2}}{\epsilon^4} + 1\bigg)}d\epsilon\nonumber\\
& \leq \left(\int_0^{\sigma_A}d\epsilon \right)^{1/2}\left(\int_0^{\sigma_A} p\log\bigg(\frac{48A_0 D_\Omega \sigma^2\sigma_A^{2}}{\epsilon^4} + 1\bigg)d\epsilon\right)^{1/2}\nonumber\\
& = \left(\int_0^{\sigma_A}d\epsilon \right)^{1/2}\left(\sigma\int_0^{\sigma_A/\sigma} p\log\bigg(\frac{48A_0 D_\Omega \sigma_A^{2}}{u^4\sigma^2} + 1\bigg)du\right)^{1/2}\nonumber\\
& \leq \sigma_A^{1/2}\left(\sigma\int_0^{\sigma_A/\sigma} p\log\bigg(\frac{48A_0 D_\Omega \sigma_A^{2}}{u^4\sigma^2} + \frac{\sigma_A^{2}}{u^4\sigma^2}\bigg)du\right)^{1/2}\nonumber\\
& \leq \sqrt{2p} \sigma_A \sqrt{\log(e^2\sqrt{1 + 48A_0 D_\Omega}\sigma/\sigma_A))}\nonumber\\
& \leq \sqrt{4p} \sigma_A \sqrt{\log(e\sqrt{1 + 48A_0 D_\Omega})} \sqrt{\log(e\sigma/\sigma_A))}\nonumber\\
& \leq c\sqrt{p(1 \vee \log(A_0 D_\Omega))} \sigma_A \sqrt{\log(e\sigma/\sigma_A))},
\end{align}
where $c = \sqrt{6\log(7e)}$.

\noindent\textbf{Step 4: Bounding $\mathbb{P}(\sup_{x\in A} Z(x)-\mu(x) > \eta\int_0^{D/2} \sqrt{\log N(\epsilon,A,\mathfrak{d})}d\epsilon + u)$}

By Lemmas \ref{thm133} and \ref{thm211}, we have 
\begin{align}\label{BorellTIS}
P\bigg(\sup_{x\in A} Z(x)-\mu(x) > \eta\int_0^{D/2} \sqrt{\log N(\epsilon,A,\mathfrak{d})}d\epsilon + t\bigg)\leq e^{-t^2/(2\sigma^2_A)}.
\end{align}
By plugging \eqref{eq:boundEC1} into \eqref{BorellTIS}, we obtain the desired inequality with $\eta_1 = c\eta$, which completes the proof.

\section{DETAILS OF CALIBRATING $C$ VIA SIMULATION}\label{App:simu}

An upper bound of the constant $C$ in Theorem \ref{thm:undersmooth} can be obtained by examine the proof of Lemma \ref{thm133} and Theorem \ref{ourthmjasa}. However, this theoretical upper bound can be too large for practical use. In this section, we consider estimating $C$ via numerical simulation. 

According to Part 1 of Theorem \ref{thm:undersmooth}, $$C_0 \geq \mathbb{E}M/\sqrt{p(1 \vee \log(A_0 D_\Omega))},$$
where $M=\sup_{x\in \Omega}\frac{Z(x) - \mu(x)}{\sigma(x)\log^{1/2} (e \sigma/\sigma(x))}$, $A_0$ is as in \eqref{A0}, and $D_\Omega$ is the Euclidean diameter of $\Omega$. 
For a specific Gaussian process, $\mathbb{E}M/\sqrt{p(1 \vee \log(A_0 D_\Omega))}$ is a constant and can be obtained by Monte Carlo. Let $\mathcal{M}$ be the collection of Gaussian processes satisfying the conditions of Theorem \ref{thm:undersmooth}. Then
$$
C_0 = \sup_{M \in\mathcal{M}}\mathbb{E}M/\sqrt{p(1 \vee \log(A_0 D_\Omega))} =: \sup_{M \in\mathcal{M}} H(M) .
$$
 The idea is to consider various Gaussian processes and find the maximum value of $\mathbb{E}M/\sqrt{p(1 \vee \log(A_0 D_\Omega))}$. This value can be close to $C$ when we cover a broad range of Gaussian processes.

In the numerical studies, we consider $\Omega=[0,1]^p$ for $p=1,2,3$. We consider different $A_0$ values to get different $A_0D_\Omega$'s. In each Monte Carlo sampling, we approximate $M$ using $$M_1 = \sup_{x\in \Omega_1}\frac{Z(x) - \mu(x)}{\sigma(x)\log^{1/2} (e \sigma/\sigma(x))},$$ where $\Omega_1$ is the first $100, 1000, 2000$ points of the Halton sequence \citep{niederreiter1992random} for $p=1,2,3$, respectively. We calculate the average of $M_1/\sqrt{p(1 \vee \log(A_0 D_\Omega))}$ over all the simulated realizations of each Gaussian process. 

Specifically, We simulate $1000$ realizations of the Gaussian processes for $p=1$, $100$ realizations for $p=2,3$ and consider the following four cases.  In Cases 1-3, we use maximin Latin hypercube designs \citep{santner2013design}, and use independent samples from the uniform distribution in Case 4.

\textbf{Case 1:} We consider $p=1$ with $20$ and $50$ design points. We consider the Gaussian correlation functions and Mat\'ern correlation functions with $\nu = 1.5,2.5,3.5$. The results are presented in Table \ref{p1tab}.

\textbf{Case 2:} We consider $p=2$ with $20$, $50$, and $100$ design points. We consider the Gaussian correlation functions and product Mat\'ern correlation functions with $\nu = 1.5,2.5,3.5$. The results are presented in Table \ref{p2tab}.

\textbf{Case 3:} 
We consider $p=3$ with $20$, $50$, $100$ and $500$ design points. We consider the product Mat\'ern correlation functions with $\nu = 1.5,2.5,3.5$. The results are shown in Table \ref{p3table}.

\textbf{Case 4:} We consider $p=2$ with $20$, $50$, and $100$ design points. We consider the product Mat\'ern correlation functions with $\nu = 1.5,2.5,3.5$.
The results are shown in Table \ref{uniftab}.

\begin{table}[h!]
  \caption{Simulation Results of Case 1}
  \label{p1tab}
  \centering
  \begin{tabular}{l|llllllll}
    \toprule
   & design points      &   $A_0 D_\Omega=1$   & $A_0 D_\Omega=3$  & $A_0 D_\Omega=5$  &   $A_0 D_\Omega=10$  &   $A_0 D_\Omega=25$\\
    \midrule
   Gaussian & 20 & 0.11640290 &  0.1978563 & 0.2450737 & 0.4542654 & 0.859318 \\
    & 50 & 0.08102775    & 0.0916648 & 0.1206034   & 0.1683377 & 0.422786\\
    \midrule
    $\nu = 1.5$ & 20 & 0.9640650 &  1.065597 & 0.9537634 & 0.9429957 & 1.0197966\\
    & 50 & 0.9442937    & 1.009187 & 0.8981430   & 0.8331926 & 0.8372607\\
    \midrule
    $\nu = 2.5$ & 20 & 0.7432965 &  0.8554707 & 0.7804686 & 0.8371662 & 1.0074204 \\
    & 50 & 0.7304104    & 0.8218710  & 0.7346077   & 0.6987832  & 0.7563067\\
    \midrule
    $\nu = 3.5$ & 20 & 0.6054239 &  0.7248086 & 0.6833789 & 0.7711124 & 0.9608837\\
    & 50 & 0.3367513    & 0.6941391 & 0.6244660   & 0.6278185 & 0.6928741\\
    \bottomrule
  \end{tabular}
\end{table}

\begin{table}[h!]
  \caption{Simulation Results of Case 2}
  \label{p2tab}
  \centering
  \begin{tabular}{l|llllllll}
    \toprule
   & design points      &   $A_0 D_\Omega=1$   & $A_0 D_\Omega=3$  & $A_0 D_\Omega=5$  &   $A_0 D_\Omega=10$  &   $A_0 D_\Omega=25$\\
    \midrule
   Gaussian & 20 & 0.2801128 &  0.4767259 & 0.5644628  &   0.7408401 & 1.0554507 \\
    & 50 & 0.1465512    & 0.2927036 &  0.3789438  &   0.5683807 & 0.9309326\\
    & 100 & 0.1156139    & 0.1961319 & 0.2436626   &   0.4189444  & 0.7641615\\
    \midrule
    $\nu = 1.5$ & 20 & 0.8106718 &  0.9528429 & 0.8748865 & 0.9365989 & 1.0894451\\
    & 50 & 0.8114071    & 0.9299506 & 0.8568070   & 0.8576984 & 0.9964256\\
    & 100 & 0.8137517    & 0.9108342 & 0.8224467   & 0.7951887 & 0.9168643\\
    \midrule
    $\nu = 2.5$ & 20 & 0.6072854 &  0.7709362 & 0.7411921 & 0.8540687 & 1.0933120\\
    & 50 & 0.6316136    & 0.7218077  & 0.7218077 &  0.7690956 & 0.9703693\\
    & 100 &  0.5651732   & 0.6677120 & 0.6677120  & 0.7090934 & 0.8791792\\
    \midrule
    $\nu = 3.5$ & 20 & 0.5243251 &  0.6881401 & 0.6915576 & 0.8290974 & 1.0876019\\
    & 50 & 0.3947094    & 0.6420423 & 0.6434791 & 0.7030224 & 0.9494486\\
    & 100 & 0.2898865    & 0.6279639 & 0.6036111 & 0.6420049 & 0.8373886\\
    \bottomrule
  \end{tabular}
\end{table}

\begin{table}[h!]
  \caption{Simulation Results of Case 3}
  \label{p3table}
  \centering
  \begin{tabular}{lll}
    \toprule
   Cases     & &   $H(M)$    \\
    \midrule
    20 design points, $\nu=1.5$, $A_0 D_\Omega = 1$ & & 0.6977030 \\
    500 design points, $\nu=3.5$, $A_0 D_\Omega = 5$ & & 0.4961581 \\
    100 design points, $\nu=2.5$, $A_0 D_\Omega = 3$ & & 0.6628567 \\
    50 design points, $\nu=1.5$, $A_0 D_\Omega = 10$ & & 0.7632713 \\
    \bottomrule
  \end{tabular}
\end{table}

\begin{table}[h!]
\caption{Simulation Results of Case 4}
\label{uniftab}
\centering
  \begin{tabular}{lll}
    \toprule
   Cases      & &  $H(M)$      \\
    \midrule
    100 design points, $\nu=3$, $A_0 D_\Omega = 3$ & & 0.6778535 \\
    50 design points, $\nu=1.5$, $A_0 D_\Omega = 1$ & & 0.8144700 \\
    20 design points, $\nu=2.5$, $A_0 D_\Omega = 5$ & & 0.7735112 \\
    100 design points, $\nu=1.5$, $A_0 D_\Omega = 10$ & & 0.8164859 \\
    \bottomrule
  \end{tabular}
\end{table}
From Tables \ref{p1tab}-\ref{uniftab}, we find the following patterns:
\begin{itemize}
    \item All numerical values ($H(M)$) in Tables \ref{p1tab}-\ref{uniftab} are less than 1.10. Only eight entries are greater than one.
    \item In most scenarios, the obtained values are decreasing in $\nu$. This implies that $H(M)$ is smaller when $M$ is smoother.
    \item $H(M)$ is not monotonic in $A_0D_\Omega$, which implies a more complicated function relationship between $H(M)$ and $A_0D_\Omega$.
    \item In most scenarios, $H(M)$ decreases as the dimension $p$ increases.
    \item The obtained values are decreasing in the number of design points.
\end{itemize}
In summary, the largest $H(M)$ values are observed when the sample size is small, the smoothness is low and the dimension is low. Therefore, we believe that our simulation study covers the largest possible $H(M)$ values and our suggestion of choosing $C_0=1$ can be used in most practical situations.

\section{MORE FIGURES OF NUMERICAL EXPERIMENTS FOR MISSPECIFIED GAUSSIAN PROCESSES}\label{app:reGP}
Here we present more figures of numerical experiments when Gaussian process is misspecified, as shown in Figures \ref{figallmis} and \ref{figallmisQK}.

\begin{figure}[h!]
  \centering
  \includegraphics[width=0.4\textwidth]{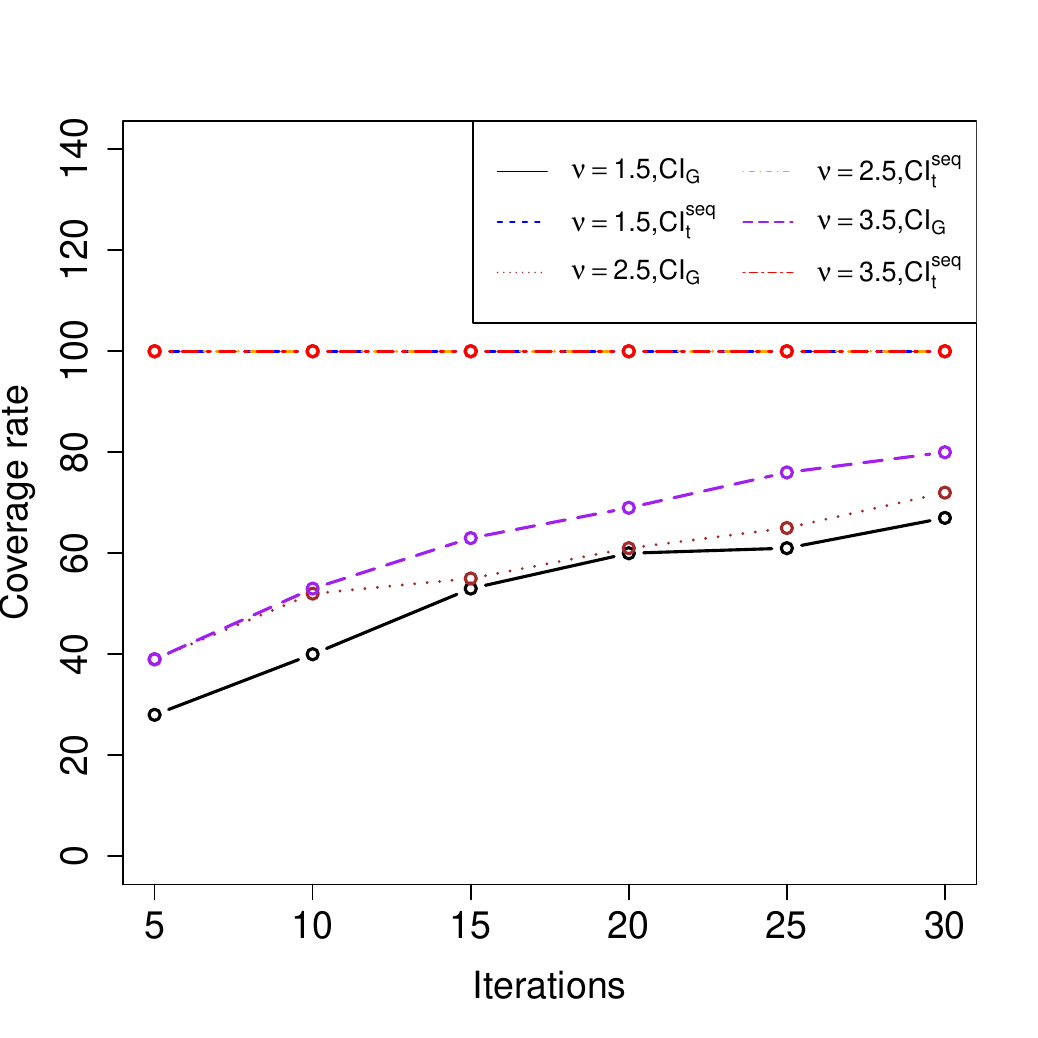}
        \includegraphics[width=0.4\textwidth]{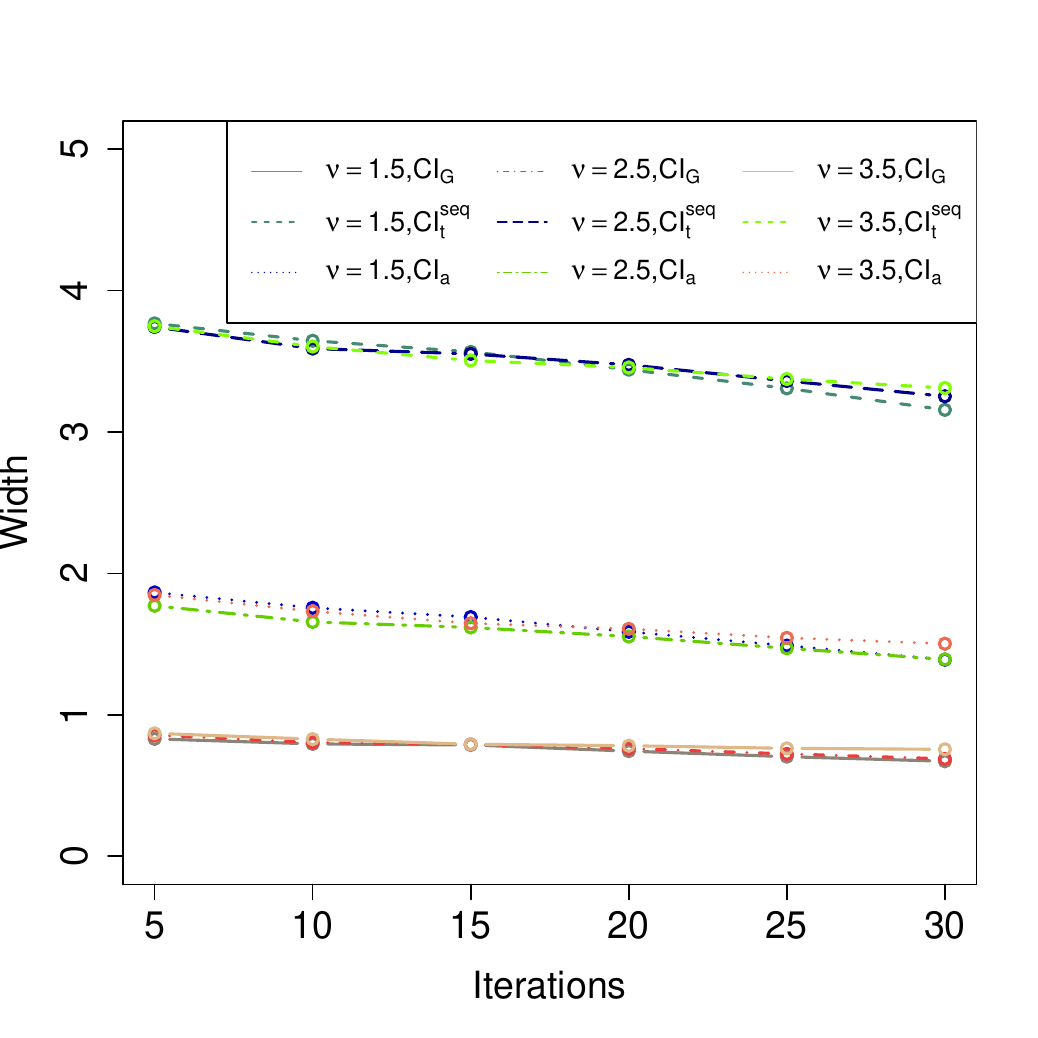}
        \includegraphics[width=0.4\textwidth]{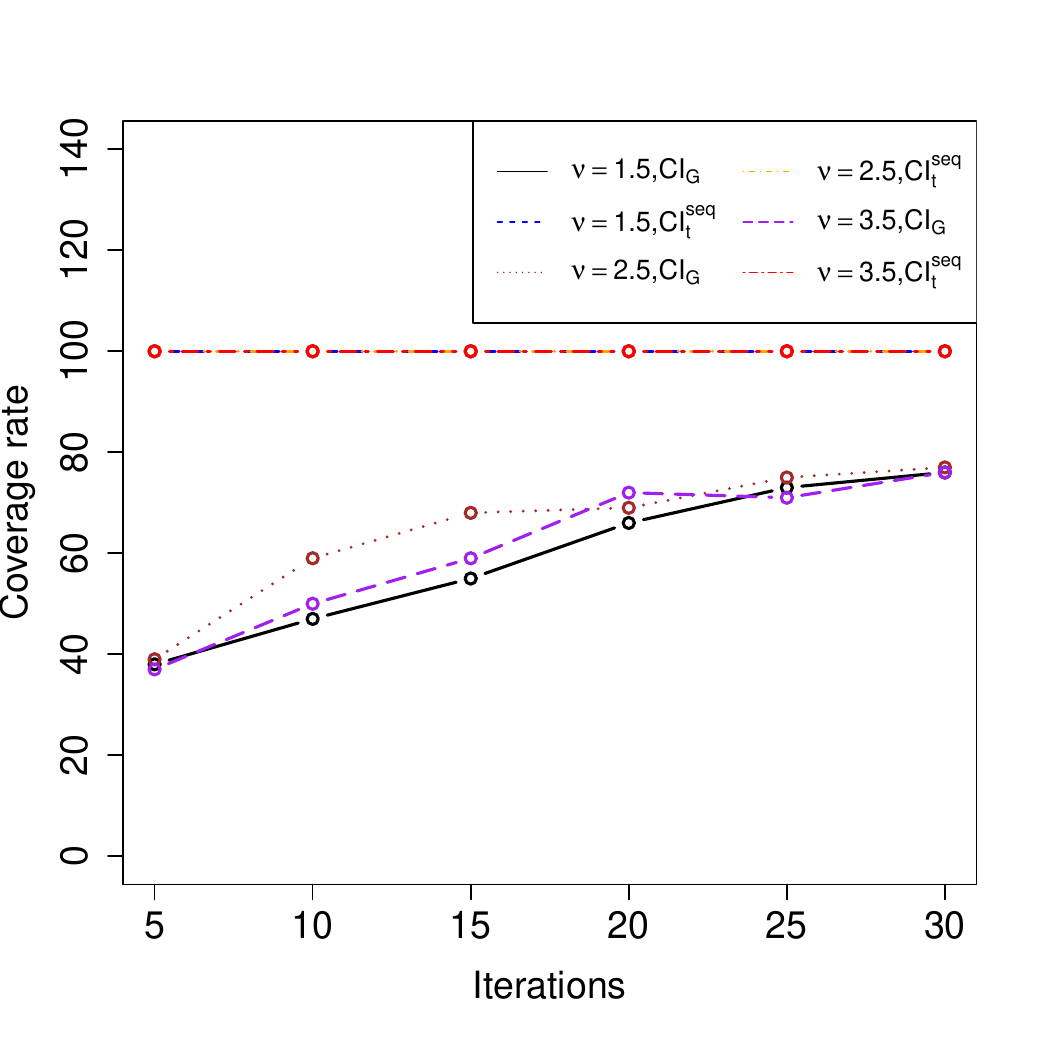}
        \includegraphics[width=0.4\textwidth]{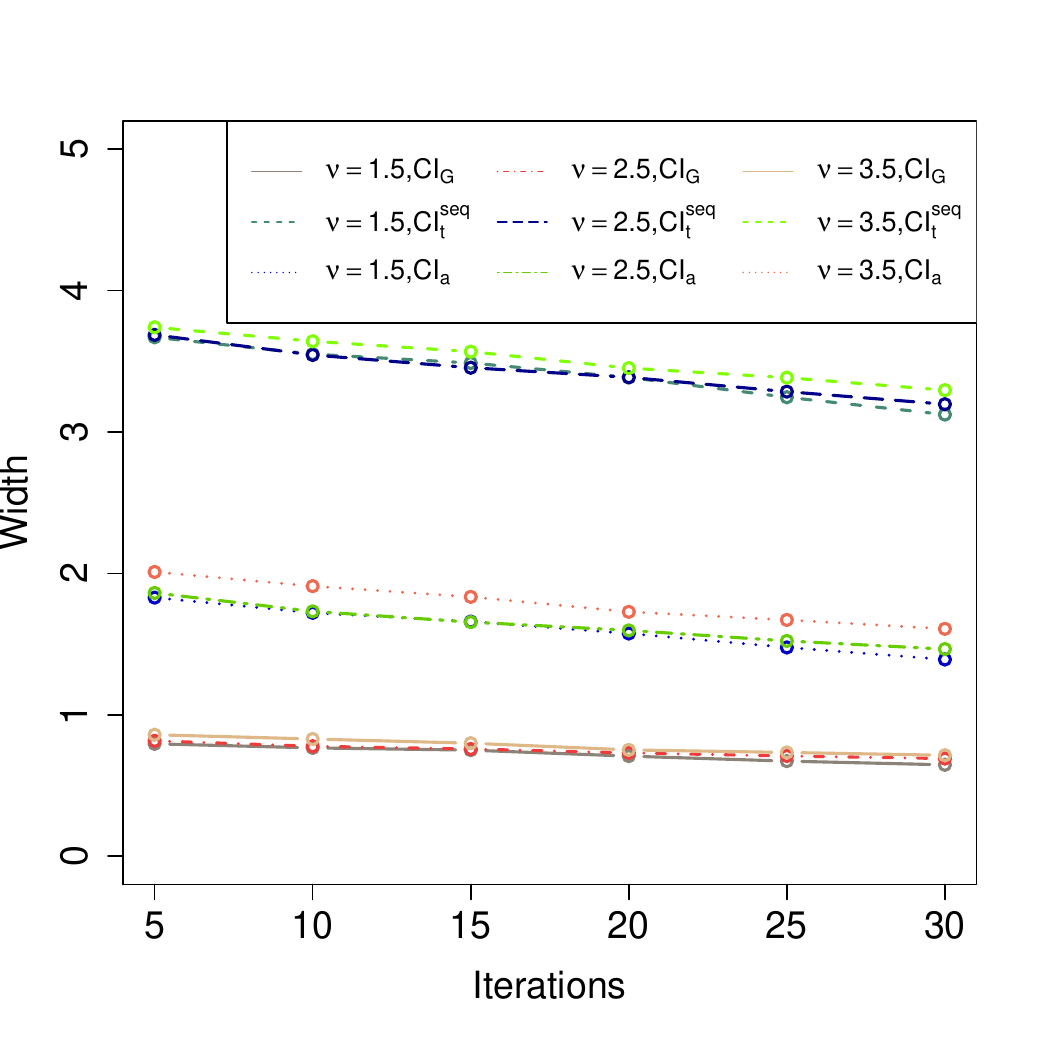}
        \includegraphics[width=0.4\textwidth]{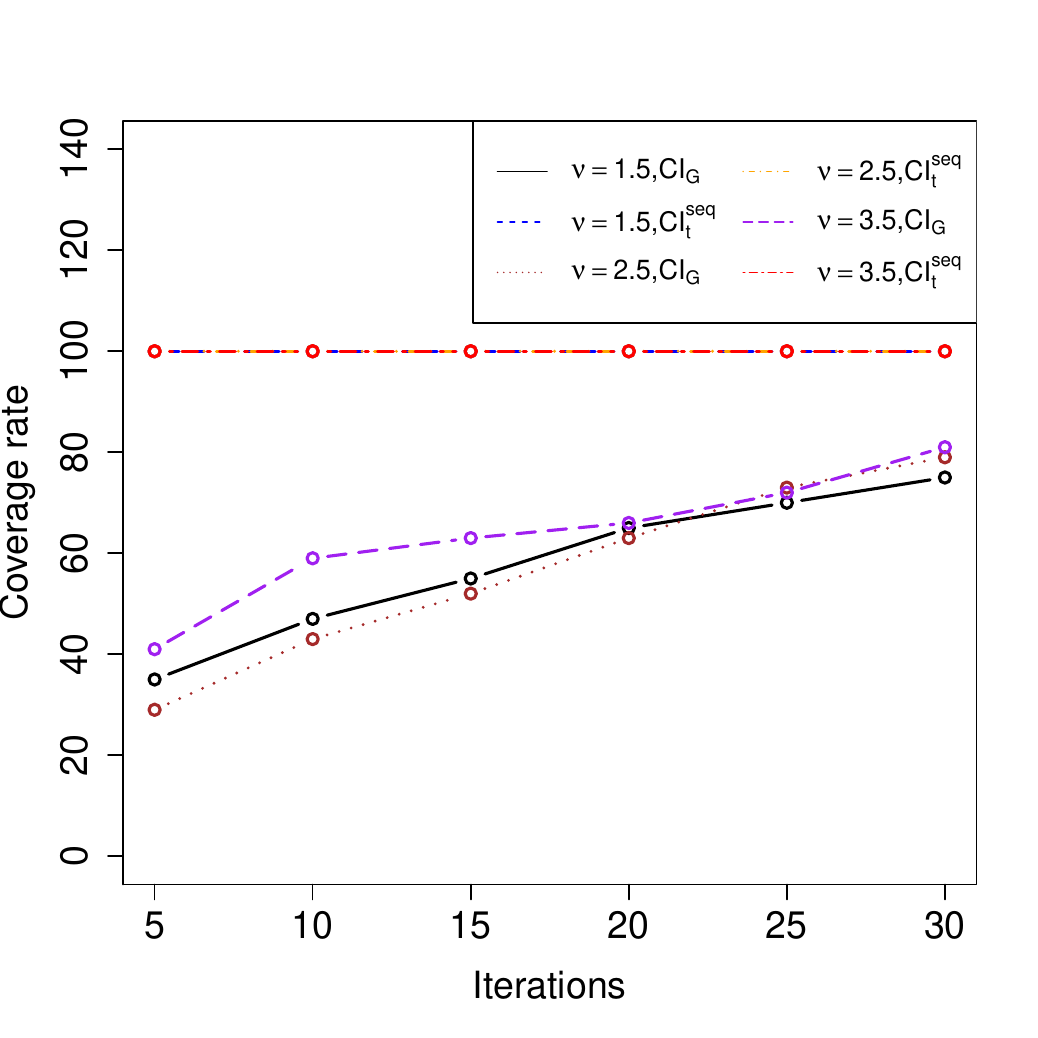}
        \includegraphics[width=0.4\textwidth]{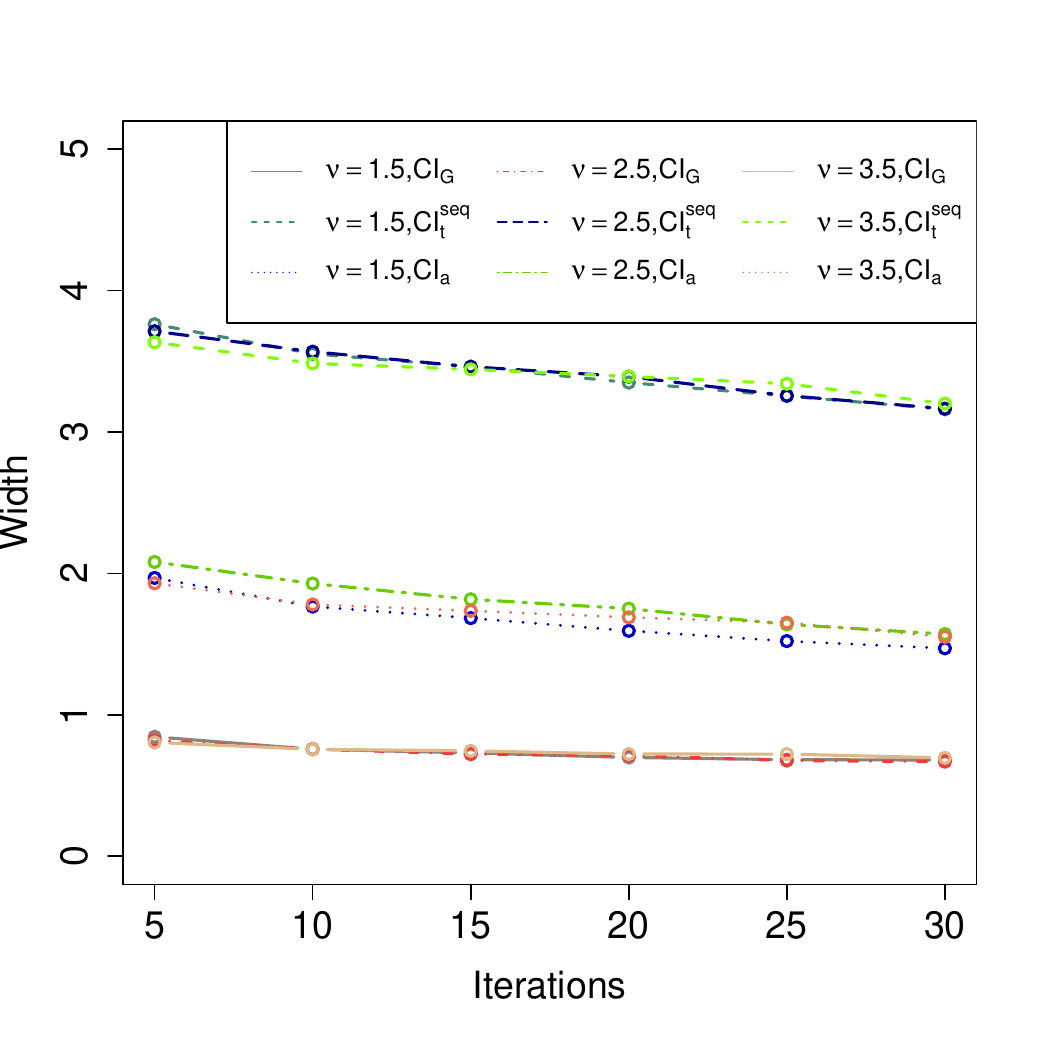}
  \caption{Coverage rates of $CI_t^{\mathbf{seq}}$ and $CI_G$ (Panels 1, 3, 5, 7) and widths of $CI_t^{\mathbf{seq}}$, $CI_G$, and $CI_a$ (Panels 2, 4, 6, 8) under different scenarios. The nominal confidence level is 95\%. \textbf{Panels 1 and 2:} The Gaussian process is well specified. 
  \textbf{Panels 1 and 2:} The Gaussian process is misspecified with $\nu_0=1.5$.
\textbf{Panels 3 and 4:} The Gaussian process is misspecified with $\nu_0=2.5$.
\textbf{Panels 5 and 6:} The Gaussian process is misspecified with $\nu_0=3.5$.}
  \label{figallmis}
\end{figure}

\begin{figure}[h!]
  \centering
  \includegraphics[width=0.45\textwidth]{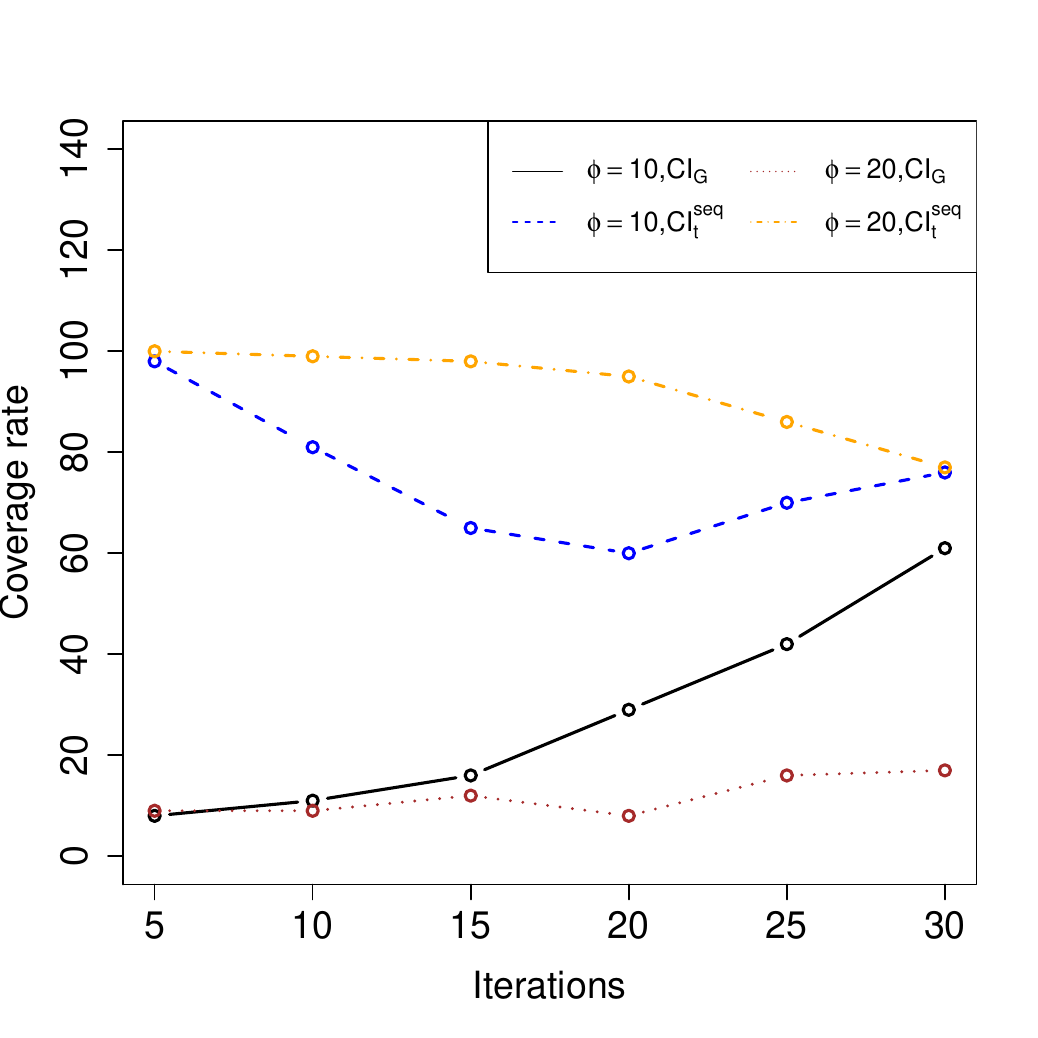}
        \includegraphics[width=0.45\textwidth]{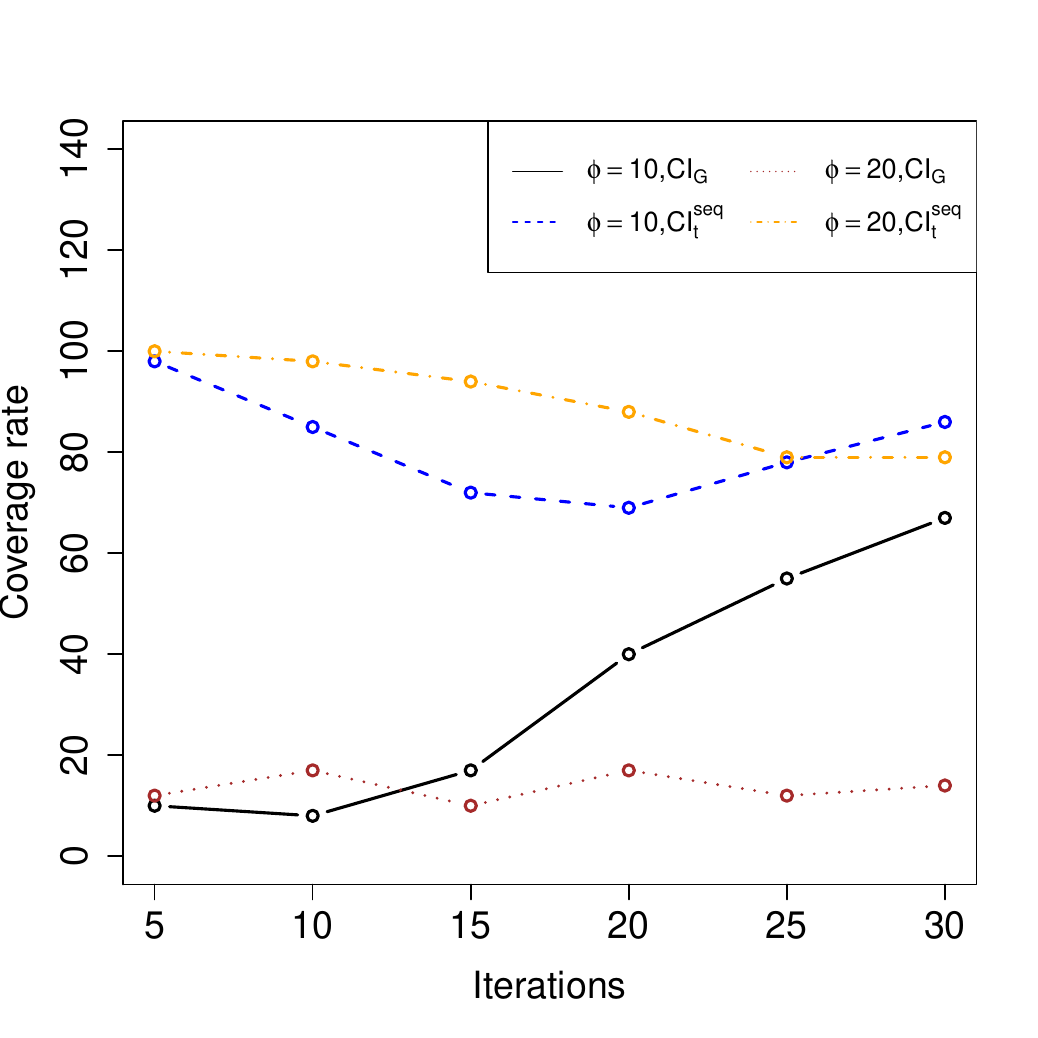}
        \includegraphics[width=0.45\textwidth]{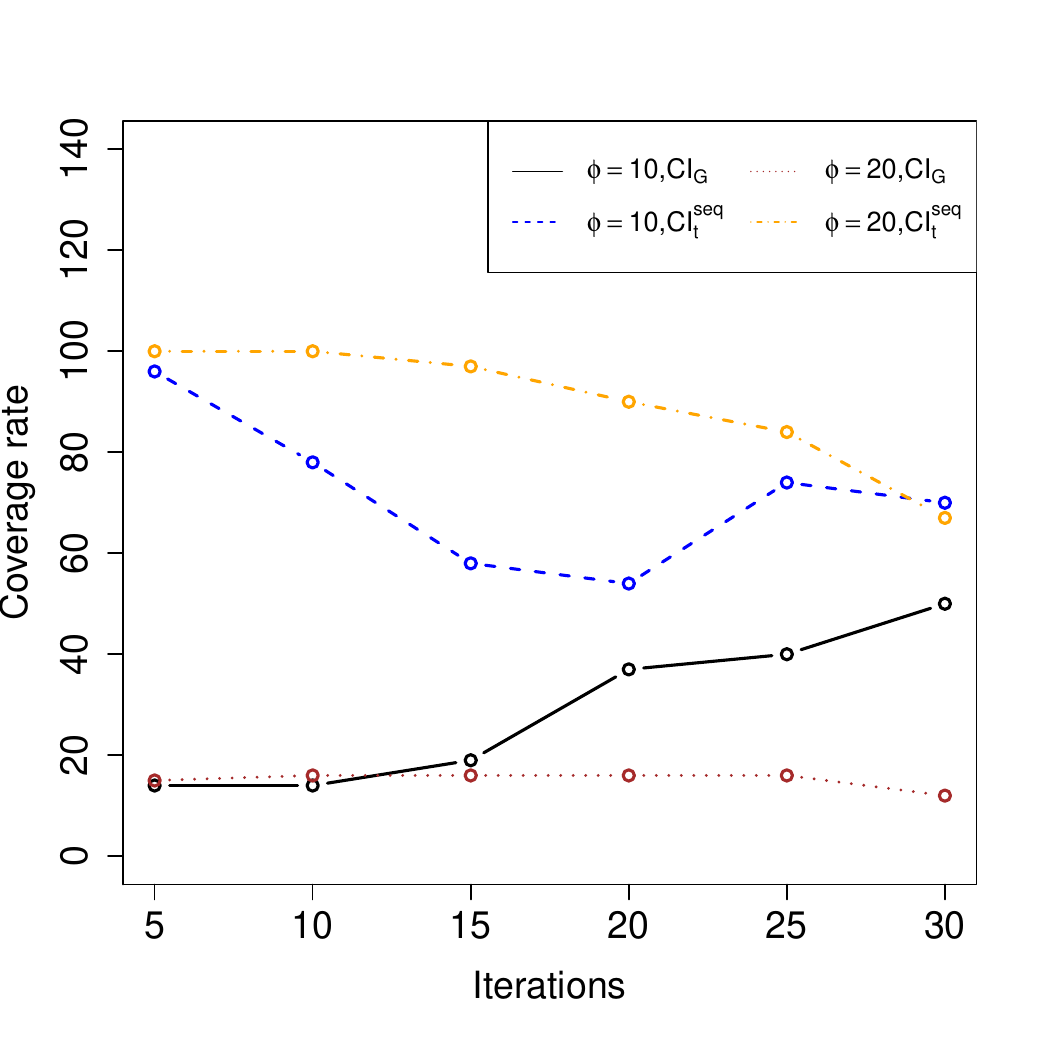}
  \caption{Coverage rates of $CI_t^{\mathbf{seq}}$ and $CI_G$ under different scenarios, where a rational quadratic correlation function is used for prediction. The nominal confidence level is 95\%. The underlying true correlation function is Mat\'ern with smoothness parameter $\nu_0$. \textbf{Panel 1:} $\nu_0=1.5$.
\textbf{Panel 2:} $\nu_0=2.5$.
\textbf{Panel 2:} $\nu_0=3.5$.}
  \label{figallmisQK}
\end{figure}

\section{DETAILS OF NUMERICAL EXPERIMENTS FOR DETERMINISTIC FUNCTIONS}\label{app:tablesDeter}

\paragraph{Deterministic function 1} The first deterministic function we consider is $$f_1(x)=1.5\sin(2\pi x/2)-0.2\sin(2\pi x/2.5)-(x-1)^2/120,$$ where $x\in [0,8]$, which is a modification of the function used in \cite{higdon2002space}. The modification is made because the original function is quite easy to be optimized. The maximum of $f_1$ is taken on the point $x^*=0.520$, and the maximum is $f_1(x^*)=1.5953$. The initial points are selected as 30 equally spaced points on the interval $[0,8]$. The results are collected in Table \ref{tab:eg2} in Appendix \ref{app:tablesDeter}.


\paragraph{Deterministic function 2} The third deterministic function we consider is the test function in  \cite{keane2008engineering}:
\begin{align*}
    f_3(x) = -(6x-2)^2\sin(12x-4), x\in [0,1].
\end{align*}
The maximum of $f_3$ is taken on the point $x^* = 0.7575$, with $f_3(x^*)=6.0207$. The initial points are selected as 30 equally spaced points on the interval $[0,8]$. We use $\Omega_1$ to approximate $\Omega$, where $\Omega_1$ is a set of grid points with grid length $1/2499$ (thus, there are 2500 test points in total). 

\paragraph{Deterministic function 3} The fourth deterministic function we consider is the rescaled form of the Branin-Hoo function \citep{picheny2013benchmark}:
\begin{align*}
    f_4(x) = & -\frac{1}{51.95}\bigg(\left(\bar x_2-\frac{5.1\bar x_1^2}{4\pi^2}+\frac{5\bar x_1}{\pi}-6\right)^2 +\left(10-\frac{10}{8\pi}\right)\cos(\bar x_1)-44.82\bigg),
\end{align*}
where $\bar x_1=15x_1-5, \bar x_2=15x_2$, and $x=(x_1,x_2)^T\in [0,1]^2$. The settings are the same of that in Deterministic function 2.

\begin{table}[h!]
\caption{Simulation Results of Deterministic Function 1. The following abbreviations are used: IN = iteration number, CI = confidence interval. The notation \Checkmark stands for ``cover'' and \XSolidBrush stands for ``not cover''.}\label{tab:eg2}
  \centering
  \begin{tabular}{l|lllllllll}
    \toprule
   & CI    &  IN $= 5$   & IN $= 10$  & IN $= 15$  &   IN $= 20$  &   IN $= 25$ & IN $= 30$\\
    \midrule
    $\nu = 1.5$ & $CI_t^{\mathbf{seq}}$ & \Checkmark &  \Checkmark & \Checkmark & \Checkmark & \Checkmark & \Checkmark \\
    & $CI_G$ & \Checkmark    & \Checkmark & \Checkmark   & \Checkmark & \Checkmark & \Checkmark\\
    \midrule
    $\nu = 2.5$ & $CI_t^{\mathbf{seq}}$ & \Checkmark &  \Checkmark & \Checkmark & \Checkmark & \Checkmark & \Checkmark \\
    & $CI_G$ & \Checkmark    & \XSolidBrush & \XSolidBrush   & \XSolidBrush & \XSolidBrush & \XSolidBrush\\
    \midrule
    $\nu = 3.5$ & $CI_t^{\mathbf{seq}}$ & \Checkmark &  \Checkmark & \Checkmark & \Checkmark & \Checkmark & \Checkmark \\
    & $CI_G$ & \Checkmark    & \XSolidBrush & \XSolidBrush   & \XSolidBrush & \XSolidBrush & \XSolidBrush\\
    \midrule
    $\nu = 5.5$ & $CI_t^{\mathbf{seq}}$ & \XSolidBrush & \XSolidBrush & \XSolidBrush   & \XSolidBrush & \XSolidBrush & \XSolidBrush\\ 
    & $CI_G$ & \XSolidBrush & \XSolidBrush & \XSolidBrush & \XSolidBrush   & \XSolidBrush & \XSolidBrush\\ 
    \bottomrule
  \end{tabular}
\end{table}


\begin{table}[h!]
  \caption{Simulation Results of Deterministic Function 2. The following abbreviations are used: IN = iteration number, CI = confidence interval. The notation \Checkmark stands for ``cover'' and \XSolidBrush stands for ``not cover''.}  \label{tab:eg4}
  \centering
  \begin{tabular}{l|llllllll}
    \toprule
   & CI    &  IN $= 5$   & IN $= 10$  & IN $= 15$  &   IN $= 20$  &   IN $= 25$ &   IN $= 30$\\
    \midrule
    $\nu = 1.5$ & $CI_t^{\mathbf{seq}}$ & \Checkmark &  \Checkmark & \Checkmark & \Checkmark  & \Checkmark & \Checkmark\\
    & $CI_G$  & \Checkmark &  \Checkmark & \Checkmark & \Checkmark  & \Checkmark & \Checkmark \\
    \midrule
    $\nu = 2.5$ & $CI_t^{\mathbf{seq}}$ & \Checkmark &  \Checkmark & \Checkmark & \Checkmark & \Checkmark & \Checkmark \\
    & $CI_G$ & \Checkmark &  \Checkmark & \Checkmark & \Checkmark  & \Checkmark & \Checkmark\\
    \midrule
    $\nu = 4$ & $CI_t^{\mathbf{seq}}$ & \Checkmark &  \Checkmark & \Checkmark & \Checkmark  & \Checkmark & \Checkmark\\
    & $CI_G$ & \Checkmark &  \Checkmark & \XSolidBrush & \XSolidBrush  & \XSolidBrush & \XSolidBrush\\
    \midrule
    $\nu = 5.5$  & $CI_t^{\mathbf{seq}}$ & \Checkmark &  \Checkmark & \Checkmark & \Checkmark  & \Checkmark & \Checkmark\\
    & $CI_G$ & \Checkmark &  \Checkmark & \XSolidBrush & \XSolidBrush  & \XSolidBrush & \XSolidBrush\\
    \bottomrule
  \end{tabular}
\end{table}

\begin{table}[h!]
  \caption{Simulation Results of Deterministic Function 3. The following abbreviations are used: IN = iteration number, CI = confidence interval. The notation \Checkmark stands for ``cover'' and \XSolidBrush stands for ``not cover''.}  \label{tab:eg5}
  \centering
  \begin{tabular}{l|llllllll}
    \toprule
   & CI    &  IN $= 5$   & IN $= 10$  & IN $= 15$  &   IN $= 20$  &   IN $= 25$ &   IN $= 30$\\
    \midrule
    $\nu = 1.1$ & $CI_t^{\mathbf{seq}}$ & \Checkmark &  \Checkmark & \Checkmark & \Checkmark  & \Checkmark & \Checkmark\\
    & $CI_G$  & \Checkmark & \Checkmark    & \Checkmark   & \Checkmark & \Checkmark & \Checkmark\\
    \midrule
    $\nu = 2.3$ & $CI_t^{\mathbf{seq}}$ & \Checkmark &  \Checkmark & \Checkmark & \Checkmark & \Checkmark & \Checkmark \\
    & $CI_G$ & \Checkmark & \Checkmark     & \Checkmark   & \Checkmark & \Checkmark & \Checkmark\\
    \midrule
    $\nu = 2.8$ & $CI_t^{\mathbf{seq}}$ & \Checkmark &  \Checkmark & \Checkmark & \Checkmark  & \Checkmark & \Checkmark\\
    & $CI_G$ & \Checkmark     & \Checkmark   & \Checkmark & \Checkmark & \Checkmark & \Checkmark\\
    \midrule
    $\nu = 4$  & $CI_t^{\mathbf{seq}}$ & \Checkmark &  \Checkmark & \Checkmark & \Checkmark  & \Checkmark & \Checkmark\\
    & $CI_G$ & \Checkmark     & \Checkmark   & \Checkmark & \Checkmark & \Checkmark & \Checkmark\\
    \bottomrule
  \end{tabular}
\end{table}


\section{ILLUSTRATION OF CONFIDENCE REGIONS}
We plot confidence regions for one realizations of Gaussian process with smoothness $\nu=1.5$. The iteration number is 30. The results are shown in Figure \ref{figCRegion}. It can be seen from Figure \ref{figCRegion} that our confidence region is more conservative than the naive one.

\begin{figure*}[h!]
  \centering
  \includegraphics[width=0.44\textwidth]{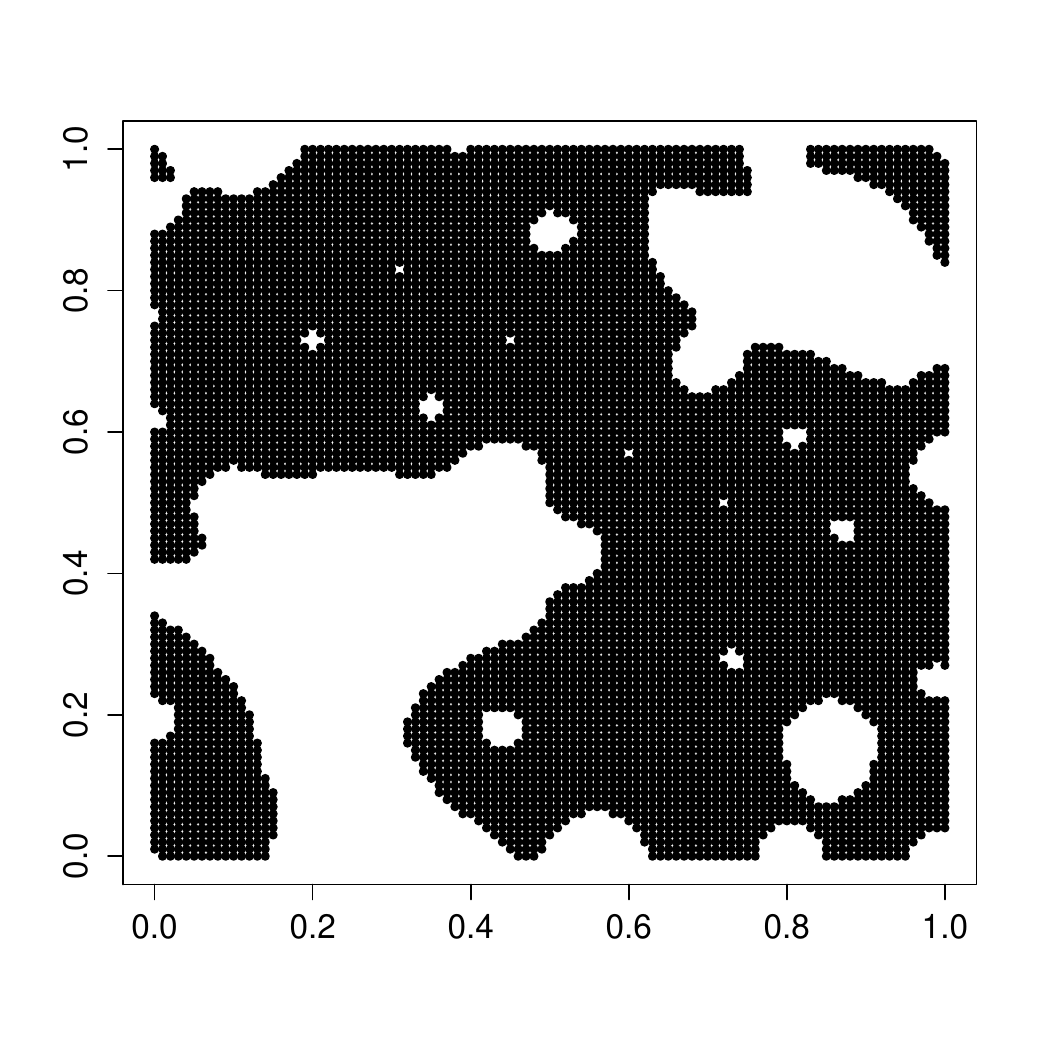}
        \includegraphics[width=0.44\textwidth]{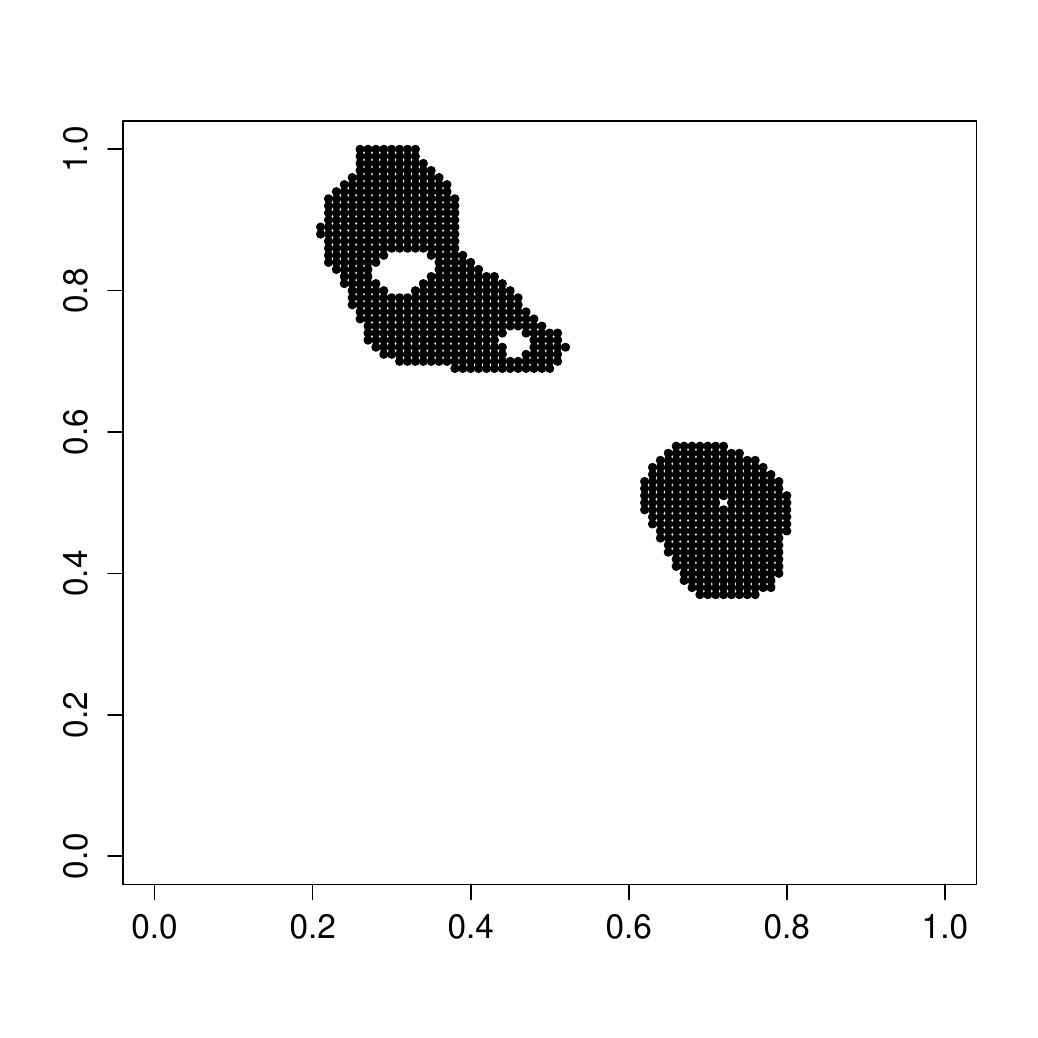}
  \caption{Confidence region of $CI_t^{\mathbf{seq}}$ (Panel 1) and $CI_G$ (Panel 2). The nominal confidence level is 95\%. }
  \label{figCRegion}
\end{figure*}


\vfill


\end{document}